\documentclass[12pt, reqno]{amsart}
\usepackage{amssymb,amsthm,amsfonts,amsmath}

\usepackage{hyperref}
\usepackage{mathrsfs}

\evensidemargin0cm
\oddsidemargin0cm
\textwidth16cm

\newcommand{\Phit}{\widetilde{\Phi}}

\newcommand{\giv}{\,|\,}

\newcommand{\F}{\mathbb F}
\newcommand{\Ga}{\Gamma}
\newcommand{\de}{\delta}
\newcommand{\Gr}{\operatorname{Gr}}
\newcommand{\G}{\mathscr G}
\newcommand{\ka}{\varkappa}
\newcommand{\epsi}{\varepsilon}
\newcommand{\De}{\Delta}
\newcommand{\prob}{\mathbb P}

\newcommand{\V}{\mathcal{V}}
\newcommand{\tV}{\widetilde{\V}}
\newcommand{\tv}{\tilde v}

\newtheorem{theorem}{Theorem}[section]
\newtheorem{proposition}[theorem] {Proposition}

\newtheorem{corollary}[theorem]{Corollary}
\newtheorem{lemma}[theorem]{ Lemma}

\theoremstyle{definition}
\newtheorem{definition}[theorem]{Definition}
\newtheorem{remark}[theorem]{Remark}

\begin{document}
\title{A $q$-analogue of de Finetti's theorem}
\author{Alexander Gnedin}
\address{Department of Mathematics, Utrecht University, the Netherlands}
\email{A.V.Gnedin@uu.nl}

\author{Grigori Olshanski}
\address{Institute for Information Transmission Problems, Moscow, and
Independent University of Moscow, Russia}
\email{olsh2007@gmail.com}

\thanks{G.~O. was supported by a grant from the Utrecht
University, by the RFBR grant 08-01-00110, and by the project SFB 701
(Bielefeld University).}

\begin{abstract}
\noindent A $q$-analogue of de Finetti's theorem is obtained in terms of a
boundary problem for the $q$-Pascal graph. For $q$ a power of prime this leads
to a characterisation of random spaces over the Galois field $\F_q$ that are
invariant under the natural action of the infinite group of invertible matrices
with coefficients from  $\F_q$.
\end{abstract}

\maketitle

\section{Introduction}\label{1}
The infinite symmetric group  ${\mathfrak S}_\infty$ consists of bijections
$\{1,2,\dots\}\to\{1,2,\dots\}$ which move only finitely many integers. The
group ${\mathfrak S}_\infty$ acts on the product space $\{0,1\}^\infty$ by
permutations of the coordinates. A random element of this space, that is a
random infinite binary sequence, is called {\it exchangeable} if its
probability law is invariant under the action of ${\mathfrak S}_\infty$. De
Finetti's theorem asserts that every exchangeable sequence can be generated in
a unique way by the following two-step procedure: first choose at random the
value of parameter $p$ from some probability distribution on the unit interval
$[0,1]$, then run an infinite Bernoulli process with probability $p$ for $1$'s.

One approach to this classical result, as presented in Feller \cite[Ch. VII,
\S4]{Feller}, is based on the following exciting connection with the Hausdorff
moment problem. By exchangeability, the law of a  random  infinite binary
sequence is determined by the array  $(v_{n,k})$, where $v_{n,k}$ equals the
probability of every initial sequence of length $n$ with $k$ $1$'s. The rule of
addition of probabilities yields the backward recursion
\begin{equation}\label{dP}
v_{n,k}=v_{n+1,k}+v_{n+1,k+1}, ~~~0\leq k\leq n, ~n=0,1,\ldots,
\end{equation}
which readily implies that the array can be derived by iterated differencing of
the  sequence $(v_{n,0})_{n=0,1,\dots}$. Specifically, setting
\begin{equation}\label{uv}
u^{(k)}_l=v_{l+k,k}, \qquad l=0,1,\dots, \quad k=0,1,\dots,
\end{equation}
and denoting by $\de$ the  difference operator acting on sequences
$u=(u_l)_{l=0,1,\dots}$ as
$$
(\de u)_l=u_{l}-u_{l+1},
$$
the recursion \eqref{dP} can be written as
\begin{equation}\label{de}
u^{(k)}= \de u^{(k-1)}, \qquad k=1,2,\dots\,.
\end{equation}
Since $v_{n,k}\geq 0$, the sequence $u^{(0)}$ must be completely monotone, that
is, componentwise
$$
\underbrace{\de\circ\dots\circ\de}_k u^{(0)}\ge0, \qquad k=0,1,\dots,
$$
but then Hausdorff's theorem implies that there exists a representation
\begin{equation}\label{Haus}
v_{n,k} =u^{(k)}_{n-k}=\int_{[0,1]} p^k(1-p)^{n-k}\mu({\rm d}p)
\end{equation}
with uniquely determined  probability measure $\mu$. De Finetti's theorem
follows since $v_{n,k}=p^k(1-p)^{n-k}$ for the Bernoulli process with parameter
$p$. See \cite{Aldous} for other proofs and extensive survey of generalisations
of this result.

The present note is devoted to variations on the $q$-analogue of de Finetti's
theorem,  which was briefly outlined in Kerov \cite{Kerov} within the framework
of the boundary problem for generalised Stirling triangles. The boundary
problem for other weighted versions of the Pascal triangle was studied in
\cite{Euler}, \cite{Gibbs},
and for more general graded graphs in
\cite{Zigzag}, \cite{Kerov}, \cite{KOO}.

\begin{definition}\label{q-exch}
Given $q>0$, let us say that a random binary sequence
$\epsi=(\epsi_1,\epsi_2,\dots)\in\{0,1\}^\infty$ is {\it $q$-exchangeable\/} if
its probability law $\prob$ is $\mathfrak S_\infty$-quasiinvariant with a
specific cocycle, which is uniquely determined by the following condition:
Denoting by $\prob(\epsi_1,\dots,\epsi_n)$ the probability of an initial
sequence $(\epsi_1,\dots,\epsi_n)$, we have for any $i=1,\dots,n-1$
$$
\prob(\epsi_1,\dots,\epsi_{i-1},\epsi_{i+1},\epsi_i,
\epsi_{i+2},\dots,\epsi_n)=
q^{\epsi_i-\epsi_{i+1}}\prob(\epsi_1,\dots,\epsi_n).
$$
In words: under an elementary transposition of the form $(\ldots,1,0,\ldots)
\to (\ldots,0,1,\ldots)$, probability is multiplied by $q$.
\end{definition}

\begin{theorem}\label{q-de-F}
Assume $0<q<1$. There is a bijective correspondence $\prob\leftrightarrow\mu$
between the probability laws\/ $\prob$ of infinite $q$-exchangeable binary
sequences and the probability measures $\mu$ on the closed countable set
$$
\Delta_q:=\{1,q,q^2,\ldots\}\cup\{0\}\subset[0,1].
$$
\end{theorem}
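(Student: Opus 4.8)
The plan is to follow the route indicated in the introduction: translate $q$-exchangeability into a harmonic-function problem for the $q$-Pascal graph, then solve it by conditioning on the total number of $1$'s. First I would record that, by the cocycle in Definition~\ref{q-exch}, the probability of any initial word $w=(\epsi_1,\dots,\epsi_n)$ with $k$ ones equals $q^{\operatorname{inv}(w)}v_{n,k}$, where $\operatorname{inv}(w)=\#\{i<j:\epsi_i=0,\epsi_j=1\}$ and $v_{n,k}:=\prob(1^k0^{n-k})$ is the probability of the sorted word. Indeed, straightening $w$ to $1^k0^{n-k}$ uses exactly $\operatorname{inv}(w)$ transpositions of the form $(\dots,0,1,\dots)\to(\dots,1,0,\dots)$, each dividing the probability by $q$, so $\prob$ is determined by the array $(v_{n,k})$. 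Kolmogorov consistency $\prob(w)=\prob(w0)+\prob(w1)$ applied to $w=1^k0^{n-k}$ becomes the $q$-Pascal recursion $v_{n,k}=v_{n+1,k}+q^{n-k}v_{n+1,k+1}$, with $v_{0,0}=1$, $v_{n,k}\ge0$; conversely any such solution yields a consistent family, hence a $q$-exchangeable $\prob$. Exactly as in the passage from \eqref{dP} to \eqref{Haus}, the substitution $u^{(k)}_l=v_{l+k,k}$ turns the recursion into $u^{(k)}=\de_q u^{(k-1)}$ for the $q$-difference $(\de_q u)_l=q^{-l}(u_l-u_{l+1})$, so all data are encoded in the single $q$-completely monotone sequence $u^{(0)}_l=v_{l,0}$.

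\textbf{The extremal measures.} Next I would guess and verify the extreme solutions. For each $\kappa=0,1,2,\dots$ let $E_\kappa$ be a law supported on the \emph{countable} set of sequences with exactly $\kappa$ ones; $q$-quasiinvariance forces it to assign to a configuration with ones at positions $p_1<\dots<p_\kappa$ a weight proportional to $q^{\operatorname{inv}}=q^{\sum_i(p_i-i)}$, and since $0<q<1$ the normalization $\sum q^{\operatorname{inv}}=1/(q;q)_\kappa$ is finite, so $E_\kappa$ is unique. A direct check gives the harmonic function $E_\kappa(n,k)=q^{(n-k)(\kappa-k)}(q;q)_\kappa/(q;q)_{\kappa-k}$ for $k\le\kappa$ and $0$ otherwise, the recursion following from $(q;q)_{\kappa-k}=(1-q^{\kappa-k})(q;q)_{\kappa-k-1}$. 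The one remaining extreme law $E_\infty$ is the all-ones sequence, $v_{n,k}=\mathbf 1[k=n]$. These correspond to the points of $\Delta_q$ via $E_\kappa\leftrightarrow q^\kappa$ and $E_\infty\leftrightarrow0$, and this is a homeomorphism since $E_\kappa\to E_\infty$ pointwise as $\kappa\to\infty$ while $q^\kappa\to0$.

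\textbf{Decomposition and bijection.} The core step is to show every $q$-exchangeable $\prob$ is a unique mixture of the $E_\kappa$ and $E_\infty$. I would condition on the total number of ones $T=\sum_i\epsi_i\in\{0,1,\dots\}\cup\{\infty\}$. On $\{T=\kappa\}$ all configurations lie in a single $\mathfrak S_\infty$-orbit, so the cocycle forces the conditional law to be precisely $E_\kappa$. For $\{T=\infty\}$ I would argue that any configuration with infinitely many ones and at least one zero has $\operatorname{inv}=\infty$, hence relative weight $q^{\infty}=0$; thus $\{T=\infty\}$ carries mass only on the all-ones sequence, with conditional law $E_\infty$. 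Consequently $\prob=\sum_{\kappa\ge0}\prob(T=\kappa)E_\kappa+\prob(T=\infty)E_\infty$, and $\mu$ defined by $\mu(\{q^\kappa\})=\prob(T=\kappa)$, $\mu(\{0\})=\prob(T=\infty)$ is the required measure on $\Delta_q$. Uniqueness is immediate, since $T$ is a function of $\epsi$; and every $\mu$ on $\Delta_q$ yields a $q$-exchangeable law as the corresponding mixture, convex combinations preserving the cocycle. This gives the bijection $\prob\leftrightarrow\mu$.

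\textbf{The main obstacle.} The hardest point is the analysis of the tail event $\{T=\infty\}$: one must show rigorously that, for $0<q<1$, a $q$-exchangeable law gives positive probability to infinitely many ones only through the all-ones sequence, so that configurations with infinitely many ones and infinitely many zeros are null. This is where $q<1$ is essential and is the $q$-analogue of the collapse of the classical boundary from the interval $[0,1]$ to the countable set $\Delta_q$: the weight $q^{\operatorname{inv}}$ suppresses every limiting regime with a positive frequency of ones, pushing all of them onto the single point $0\in\Delta_q$. I would confirm this through the explicit Martin kernel $q^{(n-k)(K-k)}\binom{N-n}{K-k}_q\big/\binom{N}{K}_q$ of the $q$-Pascal graph, whose $N\to\infty$ limits exist only when $K$ stays bounded, giving $E_\kappa$, or when $K\to\infty$ forces $k=n$, giving $E_\infty$, so that no continuous spectrum survives.
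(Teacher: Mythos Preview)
Your reduction to the recursion $v_{n,k}=v_{n+1,k}+q^{n-k}v_{n+1,k+1}$ and your identification of the extreme laws $E_\kappa$ (sequences with exactly $\kappa$ ones) and $E_\infty$ (all ones) are correct and agree with what the paper does in Section~\ref{2} and in formula~\eqref{Phi}. Your main idea---condition on $T=\sum_i\epsi_i$ and read off $\mu$ as the law of $q^T$---is genuinely different from the paper's route. The paper never conditions on $T$; instead it runs the full Martin boundary machinery of~\cite{KOO}: it computes the limits of the kernel~\eqref{multist} (Lemma~\ref{MB}), then invokes the abstract fact that density of the span of the limit functions $\Phit_{n,k}$ in $C(\De_q)$ forces the Martin boundary to equal the minimal boundary, whence Choquet gives the unique integral representation (Theorem~\ref{main}). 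Your approach is more concrete and, for finite $T$, more elementary: on the countable orbit $\{T=\kappa\}$ the cocycle pins down the conditional law uniquely as $E_\kappa$, no boundary theory needed.

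The soft spot is exactly where you flag it. The sentence ``relative weight $q^{\infty}=0$'' is a heuristic, not a proof: the all-ones sequence lies in a different $\mathfrak S_\infty$-orbit from any sequence with a zero, so the cocycle does not compare them, and the set $\{T=\infty,\ \exists j:\epsi_j=0\}$ is uncountable. Your proposed fix---compute the Martin kernel limits---is precisely Lemma~\ref{MB}, but by itself that only identifies the \emph{possible} limits; to conclude you still need the abstract step (density of the $\Phit_{n,k}$ in $C(\De_q)$, or an equivalent compactness argument) that the paper supplies. At that point you have reproduced the paper's proof and the conditioning on $T$ becomes redundant. If you want to keep your argument self-contained, a cleaner closure is available: from $\prob(\epsi_1=0\mid K_N=m)=q^{m}[N-m]/[N]\le q^{m}$ one gets $\prob(\epsi_1=0,\ K_N\ge M)\le q^{M}$ for every $N$, hence $\prob(\epsi_1=0,\ T=\infty)=0$; quasi-invariance then gives $\prob(\epsi_j=0,\ T=\infty)=0$ for all $j$, so on $\{T=\infty\}$ the sequence is a.s.\ all ones. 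With this patch your conditioning proof stands on its own and is arguably more transparent than the paper's.
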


More precisely, a $q$-exchangeable sequence can be generated in a unique way by
first choosing at random a point $x\in\De_q$ distributed according to $\mu$ and
then running a certain $q$-analogue of the Bernoulli process indexed by $x$.
Each law $\prob$ is uniquely determined by the infinite triangular
array
\begin{equation}\label{qvnk}
v_{n,k}:=\prob(\,\underbrace{1,\dots,1}_k,\underbrace{0,\dots,0}_{n-k}\,),
\qquad 0\le k\le n<\infty,
\end{equation}
which in turn is given by a $q$-version of formula \eqref{Haus}, with $\De$ being
replaced by $\De_q$ (Theorem \ref{main}).
A similar result with switching the roles of $0$'s and $1$'s and replacing $q$
by $q^{-1}$ also holds for $q>1$.

The rest of the paper is organized as follows. In Section \ref{2} we introduce
the $q$-Pascal graph and formulate the $q$-exchangeability in terms of certain
Markov chains on this graph. In Section \ref{3} we find a characteristic
recursion for the numbers \eqref{qvnk}, which is a $q$-deformation of
\eqref{dP}, and we prove the main result, equivalent to Theorem \ref{q-de-F},
using the method of \cite{KOO}. In Section \ref{4} we discuss three examples:
two $q$-analogues of the Bernoulli process and a $q$-analogue of P\'olya's urn
process. Finally, in Section \ref{5}, for $q$ a power of a prime number, we
provide an interpretation of the theorem in terms of random subspaces in an
infinite-dimensional vector space over $\F_q$.

\section{The $q$-Pascal graph}\label{2}

For $q>0$, the $q$-Pascal graph is a {\it weighted\/} directed graph $\Ga(q)$
on the infinite vertex set
$$
\Gamma =\{(l,k): l,k=0,1,\ldots\}.
$$
Each vertex $(l,k)$ has two weighted outgoing edges $(l,k)\to(l+1,k)$ and
$(l,k)\to(l,k+1)$ with weights $1$ and $q^l$, respectively. The vertex set is
divided into levels $\Gamma_n=\{(l,k): l+k=n\}$, so $\Gamma=\cup_{n\geq 0}
\Gamma_n$ with $\Gamma_0$ consisting of the sole root vertex $(0,0)$. For a
path in $\Gamma$ connecting two vertices $(l,k)\in \Gamma_{l+k}$ and
$(\lambda,\varkappa)\in \Gamma_{\lambda+\varkappa}$ we define the weight to be
the product of weights of edges along the path. For instance, the weight of
$(2,3)\to(2,4)\to(3,4)\to(3,5)$ is $q^5=q^2\cdot 1\cdot q^3$. Clearly, such a
path exists if and only if $\lambda\ge l$, $\varkappa\ge k$.

We shall consider certain transient Markov chains $S=(S_n),$
with state-space $\Ga$, which start at the root $(0,0)$ and move along the
directed edges, so that $S_n\in\Ga_n$ for every $n=0,1,\dots$.
Thus, a trajectory of $S$
is an infinite directed path in $\Ga$ started at the root.

\begin{definition}\label{centr}
Adopting the terminology introduced by Vershik and Kerov (see \cite{Kerov}), we
say that a Markov chain $S$ on $\Ga(q)$ is {\it central\/} if the following
condition is satisfied for each vertex $(n-k,k)\in\Ga_n$ visited by $S$ with
positive probability: given $S_n=(n-k,k)$, the conditional probability that $S$
follows each particular path connecting $(0,0)$ and
 $(n-k,k)$ is proportional to the weight of the path.
\end{definition}

\begin{remark}\label{M-pr}
If we only require the centrality condition to hold for all $(l,k)\in
\Gamma_\nu$ for fixed $\nu$, then we have it satisfied also for all $(l,k)$
with $l+k\leq\nu$. From this it is easy to see that the centrality condition
{\it implies} the Markov property of $S$ in reversed time $n=\ldots,1,0$, hence
also implies the Markov property in forward time $n=0,1,\ldots$.
\end{remark}

In the special case $q=1$ Definition \ref{centr} means  that in the Pascal
graph $\Gamma(1)$   all paths with common endpoints are equally likely.

Recall a  bijection between the infinite binary sequences
$(\epsi_1,\epsi_2,\dots)$ and infinite directed paths in $\Gamma$ started at
the root $(0,0)$. Specifically, given a path, the $n$th digit $\epsi_n$ is
given the value 0 or 1 depending on whether $l$ or $k$ coordinate is increased
by 1. Indentifying a path with a sequence  $(n-K_n,K_n)$ (where $0\leq K_n\leq
n$), the correspondence can be written as
$$
K_n=\sum_{j=1}^n \epsi_j\,,~~~~\epsi_n=K_n-K_{n-1},~~~~n=1,2,\ldots.
$$

\begin{proposition}\label{prop2A}
By virtue of the bijection between $\{0,1\}^\infty$ and the paths in~ $\Gamma$,
each~ $q$-exchangeable sequence corresponds to a central Markov chain on
$\Gamma(q)$, and vice versa.
\end{proposition}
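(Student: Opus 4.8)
The plan is to establish the correspondence in both directions by translating the defining condition of each object into a statement about the array $(v_{n,k})$ and the path weights, and then checking they coincide.

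First I would fix the dictionary. Under the bijection, an initial binary sequence $(\epsi_1,\dots,\epsi_n)$ with $k$ ones corresponds to a specific directed path from $(0,0)$ to $(n-k,k)$, and conversely each such path records a specific arrangement of the $k$ ones among $n$ positions. The key observation I would make is that the weight of a path depends only on its endpoint together with the \emph{order} in which the steps are taken, and more precisely that swapping two consecutive steps of the path — replacing a ``$0$ then $1$'' by ``$1$ then $0$'' at positions $i,i+1$ — multiplies the path weight by exactly $q$. Indeed, if at level $l+k$ we take the right (weight $q^l$) edge then the up (weight $1$) edge we accumulate $q^l\cdot 1$, whereas taking up-then-right gives $1\cdot q^{l}$; so I would carefully recompute to see that the relevant elementary transposition of the form $(\dots,1,0,\dots)\to(\dots,0,1,\dots)$ changes the $\epsi_i-\epsi_{i+1}$ exponent in just the way prescribed in Definition \ref{q-exch}. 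This is the computational heart of the argument, and I expect it to be the main obstacle: one must verify that the single-edge weight $q^l$ (depending on the $l$-coordinate at the moment the step is taken) produces exactly the global factor $q^{\epsi_i-\epsi_{i+1}}$ under a transposition, and that this local factor is insensitive to the rest of the path.

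With that weight-transposition lemma in hand, the two implications become parallel. For the forward direction, suppose $\prob$ is $q$-exchangeable. I would show its law corresponds to a central chain by arguing that any two paths with the same endpoint $(n-k,k)$ differ by a sequence of elementary transpositions of the above type, so by iterating the cocycle relation of Definition \ref{q-exch} the ratio of their probabilities equals the ratio of their weights; hence $\prob$ of each path is proportional to its weight given the endpoint, which is precisely the centrality condition of Definition \ref{centr}. The Markov property in forward time is then supplied for free by Remark \ref{M-pr}.

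For the converse, suppose $S$ is a central Markov chain. Given an initial path to $(n-k,k)$, centrality says $\prob$ of that path equals $v_{n,k}$ times the ratio of its weight to the total weight of all paths to $(n-k,k)$ (equivalently, the conditional law is weight-proportional). Applying an elementary transposition $(\dots,1,0,\dots)\to(\dots,0,1,\dots)$ keeps the endpoint fixed and, by the weight lemma, multiplies the weight by $q$; since the normalizing total weight and the endpoint probability $v_{n,k}$ are unchanged, the path probability is multiplied by $q$, which is exactly the quasiinvariance cocycle of Definition \ref{q-exch}. As the elementary transpositions generate the stabilizer action on initial segments and extend consistently to all of $\mathfrak S_\infty$, this yields $q$-exchangeability of the corresponding sequence, completing the bijection.
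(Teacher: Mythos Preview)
Your approach is correct and is exactly what the paper's one-line proof (``follows readily from Remark \ref{M-pr}, Definitions \ref{q-exch} and \ref{centr} and the structure of $\Ga(q)$'') leaves implicit: the weight--transposition computation you flag as the heart of the matter is indeed the whole content. For the record, your illustrative calculation needs the promised correction: from $(l,k)$, the order ``$\epsi=1$ then $\epsi=0$'' contributes $q^l\cdot 1=q^l$ while ``$\epsi=0$ then $\epsi=1$'' contributes $1\cdot q^{l+1}$ (not $1\cdot q^l$, since the $l$-coordinate has advanced before the second step), so the ratio under $(\dots,1,0,\dots)\to(\dots,0,1,\dots)$ is exactly $q$, matching Definition \ref{q-exch}.
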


\begin{proof}
This follows readily from Remark \ref{M-pr}, Definitions \ref{q-exch} and
\ref{centr} and the structure of $\Ga(q)$.
\end{proof}

We shall use the standard notation
$$
[n]:=1+q+\ldots+q^{n-1},~~~[n]!:=[1]\cdot[2]\cdots[n],~~~ \left[\!\!
\begin{array}{c} n\\ k \end{array}\!\!\right] :=\frac{[n]!}{[k]![n-k]!}
$$
for $q$-integers, $q$-factorials and $q$-binomial coefficients, respectively,
with the usual convention that
$
\left[\!\! \begin{array}{c} n\\ k \end{array}\!\!\right]=0
$
for $n<0$ or $k<0$. Furthermore, we set
$$(x,q)_k:=\prod_{i=0}^{k-1} (1-xq^{i})\,,~~~~1\leq k\leq\infty,$$
with the infinite product $(k=\infty)$ considered for $0<q<1$.

The following lemma justifies the name of the graph by relating it to the
$q$-Pascal  triangle of $q$-binomial coefficients.

\begin{lemma}\label{dim}
The sum of weights of all directed paths from the root $(0,0)$ to a vertex
$(n-k,k)$, denoted $d_{n,k}$, is given by
\begin{equation}\label{dim}
d_{n,k}=\left[\!\! \begin{array}{c} n\\ k \end{array}\!\!\right].
\end{equation}
More generally, $d_{n,k}^{\nu,\varkappa}$, the sum of weights of all paths
connecting two vertices $(n-k,k)$ and $(\nu-\varkappa,\varkappa)$ in $\Gamma$
is given by
$$
d_{n,k}^{\nu,\varkappa}= q^{(\varkappa-k) (n-k)}
\left[\!\! \begin{array}{c} \nu-n\\ \varkappa-k \end{array}\!\!\right].
$$

\end{lemma}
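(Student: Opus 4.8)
The plan is to prove both formulas by establishing a recursion in the level $n$ and solving it, exploiting the two-edge branching structure at each vertex. The more general formula for $d_{n,k}^{\nu,\varkappa}$ contains the first as the special case $(n,k)=(0,0)$, since the empty product gives $[0]=1$ and $q^{\varkappa\cdot 0}=1$, so $d_{0,0}^{\nu,\varkappa}=\left[\!\!\begin{array}{c}\nu\\\varkappa\end{array}\!\!\right]=d_{\nu,\varkappa}$. Hence I would aim directly at the general statement and read off \eqref{dim} at the end, though it is often cleaner to warm up on \eqref{dim} first to fix the combinatorics.

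First I would set up the fundamental recursion. Any path from $(n-k,k)$ to $(\nu-\varkappa,\varkappa)$ must take its last step along one of the two incoming edges of the terminal vertex: either $(\nu-\varkappa-1,\varkappa)\to(\nu-\varkappa,\varkappa)$, an $l$-step of weight $1$, or $(\nu-\varkappa,\varkappa-1)\to(\nu-\varkappa,\varkappa)$, a $k$-step of weight $q^{\nu-\varkappa}$ (the weight of a $k$-edge out of $(l,\varkappa-1)$ is $q^{l}=q^{\nu-\varkappa}$). Summing weights over all paths and factoring out the final edge weight gives
\begin{equation}\label{recdim}
d_{n,k}^{\nu,\varkappa}=d_{n,k}^{\nu-1,\varkappa}+q^{\nu-\varkappa}\,d_{n,k}^{\nu,\varkappa-1},
\end{equation}
valid for $\nu-\varkappa\ge n-k$ and $\varkappa\ge k$, with boundary values $d_{n,k}^{n,k}=1$ and the convention that terms with out-of-range coordinates vanish. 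This is precisely the $q$-Pascal recursion, and the whole proof reduces to checking that the claimed closed form satisfies \eqref{recdim} together with the initial condition.

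The key step is then to verify that $q^{(\varkappa-k)(n-k)}\left[\!\!\begin{array}{c}\nu-n\\\varkappa-k\end{array}\!\!\right]$ solves \eqref{recdim}. Writing $a=\nu-n$ and $b=\varkappa-k$ for the "shifted" coordinates, the prefactor $q^{(\varkappa-k)(n-k)}=q^{b(n-k)}$ does not depend on $\nu$ or $\varkappa$ separately, so it survives as a common factor, and the identity to check collapses onto a pure $q$-binomial recursion in $a,b$. Substituting and cancelling $q^{b(n-k)}$, the claim becomes $\left[\!\!\begin{array}{c}a\\b\end{array}\!\!\right]=\left[\!\!\begin{array}{c}a-1\\b\end{array}\!\!\right]+q^{\,a-b}\left[\!\!\begin{array}{c}a\\b-1\end{array}\!\!\right]$, since $\nu-\varkappa=(\nu-n)-(\varkappa-k)+(n-k)=a-b+(n-k)$ and the extra $q^{n-k}$ recombines with the prefactor exponent when one lowers $\varkappa$ to $\varkappa-1$ (that step changes $b$ to $b-1$ and thus the prefactor by $q^{-(n-k)}$, exactly compensating the $q^{n-k}$ hidden in $q^{\nu-\varkappa}$). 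This last identity is one of the two standard recurrences for $q$-binomial coefficients and follows from the definition via $[a]!/([b]![a-b]!)$ after a short manipulation of $q$-integers.

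The main obstacle, and the only place demanding care, is the bookkeeping of the $q$-powers: one must track precisely how the explicit prefactor $q^{(\varkappa-k)(n-k)}$ interacts with the edge weight $q^{\nu-\varkappa}$ when the second term lowers $\varkappa$, and confirm that the residual power matches the $q$-exponent $q^{a-b}$ in the standard $q$-Pascal rule rather than its companion form with $q^{b}$. Getting the two competing conventions to line up is where an error would creep in; everything else is a routine induction on $n=\nu-\varkappa$ (equivalently on the number of steps in the path), with the base case $d_{n,k}^{n,k}=1$ matching $q^{0}\left[\!\!\begin{array}{c}0\\0\end{array}\!\!\right]=1$. Once \eqref{recdim} and the initial condition are confirmed, uniqueness of solutions to the recursion finishes the proof, and setting $(n,k)=(0,0)$ yields \eqref{dim} as promised.
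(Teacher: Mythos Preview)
Your overall strategy---derive a last-step recursion and verify that the closed form satisfies it together with the initial condition---is exactly what the paper does for $d_{n,k}$. However, the recursion you write down contains an indexing slip that propagates into a false identity.

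The predecessor $(\nu-\varkappa,\varkappa-1)$ of the terminal vertex lies at level $(\nu-\varkappa)+(\varkappa-1)=\nu-1$, not at level $\nu$. In the superscript notation this means the second term of your recursion must read $q^{\nu-\varkappa}\,d_{n,k}^{\nu-1,\varkappa-1}$, not $q^{\nu-\varkappa}\,d_{n,k}^{\nu,\varkappa-1}$. With your version, after the substitution $a=\nu-n$, $b=\varkappa-k$ you arrive at
\[
\left[\!\!\begin{array}{c}a\\b\end{array}\!\!\right]
=\left[\!\!\begin{array}{c}a-1\\b\end{array}\!\!\right]
+q^{\,a-b}\left[\!\!\begin{array}{c}a\\b-1\end{array}\!\!\right],
\]
which is \emph{not} one of the two standard $q$-Pascal recurrences and is in fact false: at $a=3$, $b=2$ the left side is $[3]=1+q+q^2$ while the right side is $1+q\,[3]=1+q+q^2+q^3$. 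With the corrected second term the same bookkeeping (the prefactor drops by $q^{-(n-k)}$ when $b\mapsto b-1$, and $q^{\nu-\varkappa}q^{-(n-k)}=q^{a-b}$) yields the genuine identity
\[
\left[\!\!\begin{array}{c}a\\b\end{array}\!\!\right]
=\left[\!\!\begin{array}{c}a-1\\b\end{array}\!\!\right]
+q^{\,a-b}\left[\!\!\begin{array}{c}a-1\\b-1\end{array}\!\!\right],
\]
and then the induction (on the number of steps $\nu-n$, not on ``$n=\nu-\varkappa$'') goes through.

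As a minor point of comparison: for the general formula the paper does not redo the recursion but instead writes $d_{n,k}^{\nu,\varkappa}$ as the explicit sum $\sum_{n-k\le l_1\le\dots\le l_{k'}\le \nu-\varkappa}q^{l_1+\dots+l_{k'}}$ with $k'=\varkappa-k$, and then shifts each $l_i$ down by $n-k$ to reduce directly to the already-established formula for $d_{\nu-n,k'}$.
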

\begin{proof} Note that any   path from $(0,0)$ to $(n-k,k)$ has the second
component incrementing by $1$ on some $k$ edges $(l_i,i-1)\to(l_i,i)$, where
$i=1,2,\dots,k$ and $0\le l_1\le\dots\le l_k\le n-k$, thus the sum of weights
is equal to
\begin{equation}\label{sow}
d_{n,k}= \sum_{0\le l_1\le \dots \le l_k\le n-k} q^{l_1+\dots+l_k}.
\end{equation}
This array
satisfies the recursion
\begin{equation}\label{rec1}
d_{n,k}=q^{n-k}d_{n-1,k-1}+d_{n-1,k}\,,~~~~~0<k<n
\end{equation}
with the boundary conditions $d_{n,0}=d_{n,n}=1$. On the other hand, it is well
known that the array of $q$-binomial coefficients also satisfies  this
recursion \cite{Kac}, hence by the uniqueness $d_{n,k}$ is the $q$-binomial
coefficient. In the like way the sum of weights of paths from $(n-k,k)$ to
$(\nu-\varkappa,\varkappa)$ is
$$
d_{n,k}^{\nu,\varkappa}=\sum_{n-k\le l_1\le \dots \le l_{k'}\le \nu-\varkappa}
q^{l_1+\dots+l_{k'}}, \quad k':=\varkappa-k.
$$
Comparing with (\ref{sow}) we see that this is equal to $q^{(n-k)k'} \left[\!\!
\begin{array}{c} \nu-\varkappa\\ k' \end{array}\!\!\right] $.
\end{proof}

\begin{remark}
Changing $(l,k)$ to $(k,l)$ yields the {\it dual\/} $q$-Pascal graph
$\Gamma^*(q)$, which has the same set of vertices and edges as $\Gamma(q)$, but
different weights: the edge $(l,k)\to (l,k+1)$ has now weight $1$, and the edge
$(l,k)\to (l+1,k)$ has weight $q^k$. The sum of weights of paths in $\Gamma^*$
from $(0,0)$ to $(l,k)$ is again (\ref{dim}), which is related to  another
recursion for $q$-binomial coefficients, $d_{n,k}= d_{n-1,k-1}+ q^k d_{n-1,k}.$
\end{remark}

Consider the recursion
\begin{equation}\label{dual}
v_{n,k}= v_{n+1,k}+q^{n-k}v_{n+1,k+1}, ~~~~{\rm with ~~}v_{0,0}=1,
\end{equation}
which is dual to (\ref{rec1}), and  denote by $\V$ the set of nonnegative
solutions to \eqref{dual}.

\begin{proposition}\label{prop2B}
Formula
$$
{\mathbb P}\{S_n=(n-k,k)\}= d_{n,k}v_{n,k}, \qquad (n-k,k)\in \Gamma
$$
establishes
a bijective correspondence $\mathbb P\leftrightarrow v$ between the
probability laws of central Markov chains $S=(S_n)$ on $\Gamma(q)$ and
solutions $v\in\V$ to recursion \eqref{dual}.
\end{proposition}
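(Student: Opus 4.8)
The plan is to establish the bijection by showing that the formula $\prob\{S_n=(n-k,k)\}=d_{n,k}v_{n,k}$ transforms the defining properties of a central Markov chain into exactly the defining properties of a nonnegative solution $v\in\V$, and conversely. First I would verify that \emph{any} probability law $\prob$ of a central Markov chain on $\Ga(q)$ determines a nonnegative array $v$ via this formula. The key observation is that centrality forces $\prob\{S_n=(n-k,k)\}$ to be of the product form $d_{n,k}v_{n,k}$ for some $v_{n,k}\ge0$: by Lemma \ref{dim} there are exactly $d_{n,k}$ paths (weighted) from the root to $(n-k,k)$, and centrality says the conditional probability of each path given the endpoint is proportional to its weight. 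Hence the probability carried by a single path ending at $(n-k,k)$ equals $v_{n,k}$ times that path's weight, where $v_{n,k}$ is the common proportionality factor, and summing over all paths recovers $d_{n,k}v_{n,k}$. In particular $v_{n,k}$ is precisely the probability that $S$ follows any \emph{one} fixed path to $(n-k,k)$, which matches the array \eqref{qvnk}.

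Next I would check that this array satisfies the dual recursion \eqref{dual}. This is the heart of the argument. The normalization $v_{0,0}=\prob\{S_0=(0,0)\}=1$ is immediate. For the recursion, I would condition on $S_n$ and use the two outgoing edges at each vertex. The chain at state $(n-k,k)$ moves to $(n+1-k,k)$ along the weight-$1$ edge or to $(n-k,k+1)$ along the weight-$q^{n-k}$ edge. Centrality pins down these one-step transition probabilities: the probability of taking a given edge out of $(n-k,k)$ must be proportional (with the right normalization coming from the ratios of the $d$'s and $v$'s) to the edge weight times the appropriate $v$-value. Concretely, the event that $S$ passes through a fixed path to $(n-k,k)$ splits into the two events of extending that path by either outgoing edge, giving
\begin{equation*}
v_{n,k}=v_{n+1,k}+q^{n-k}v_{n+1,k+1},
\end{equation*}
which is exactly \eqref{dual}. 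Here the factor $q^{n-k}$ appears as the weight of the edge $(n-k,k)\to(n-k,k+1)$, so the single-path probability through the child $(n-k,k+1)$ is $q^{n-k}v_{n+1,k+1}$, while the weight-$1$ edge contributes $v_{n+1,k}$.

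For the converse direction, given any $v\in\V$ I would reverse the construction: define $\prob\{S_n=(n-k,k)\}:=d_{n,k}v_{n,k}$ and assign each directed edge out of $(n-k,k)$ the conditional probability obtained by dividing the child's single-path weight by the parent's, namely $v_{n+1,k}/v_{n,k}$ for the weight-$1$ edge and $q^{n-k}v_{n+1,k+1}/v_{n,k}$ for the other (with an arbitrary but consistent convention when $v_{n,k}=0$, in which case that vertex is visited with probability zero and transitions there are irrelevant). Recursion \eqref{dual} guarantees these two conditional probabilities sum to $1$, so they define a genuine transition kernel; nonnegativity of $v$ guarantees they are $\ge0$; and the product-of-weights structure makes the resulting chain central by construction. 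One then confirms by induction on $n$, using \eqref{dual} and the path-sum interpretation of $d_{n,k}$ from Lemma \ref{dim}, that the level-$n$ marginal of this chain is indeed $d_{n,k}v_{n,k}$, closing the loop. The two constructions are mutually inverse because both are governed by the same single-path probabilities $v_{n,k}$, so the correspondence $\prob\leftrightarrow v$ is bijective. The main obstacle I anticipate is the careful bookkeeping at vertices with $v_{n,k}=0$ and the verification that centrality is equivalent to, rather than merely implied by, the product form $d_{n,k}v_{n,k}$; this uses Remark \ref{M-pr} to guarantee that consistency of the one-step kernels is enough to recover the full central structure.
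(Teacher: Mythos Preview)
Your argument is correct and close in spirit to the paper's, but the organization differs in one notable way. You work \emph{forward}: you interpret $v_{n,k}$ as the proportionality constant so that $\prob(\text{path }\pi)=w(\pi)\,v_{n,k}$ for any path $\pi$ to $(n-k,k)$, and then derive \eqref{dual} by splitting the event ``$S$ follows $\pi$'' over the two one-step extensions of $\pi$. The paper instead works \emph{backward}: it observes that centrality is equivalent to the backward transition probabilities taking the standard form
\[
\prob\{S_{n-1}=(n-1-k,k)\mid S_n=(n-k,k)\}=\frac{d_{n-1,k}}{d_{n,k}},\qquad
\prob\{S_{n-1}=(n-k,k-1)\mid S_n=(n-k,k)\}=\frac{q^{n-k}\,d_{n-1,k-1}}{d_{n,k}},
\]
and then plugs these into the total-probability identity $\tv_{n,k}=\sum_{\text{children}}\prob(\text{back-step})\,\tv_{n+1,\bullet}$ to obtain \eqref{dual} after dividing by $d_{n,k}$. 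For the converse the paper again builds the chain from the backward kernel, checking by induction that each level is a probability distribution, whereas you build it from the forward kernel. Both routes are short; the paper's backward formulation has the advantage of feeding directly into the Martin-boundary machinery of Section~\ref{3}, where the multistep backward probabilities \eqref{multist} are the central object, while your forward formulation makes the connection to the cylinder probabilities \eqref{qvnk} more transparent.

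One small slip to clean up: you write that ``$v_{n,k}$ is precisely the probability that $S$ follows any \emph{one} fixed path to $(n-k,k)$''. As your own preceding sentence says, the path probability is $w(\pi)\,v_{n,k}$, so this equals $v_{n,k}$ only for the weight-$1$ path --- which is exactly the path encoded by \eqref{qvnk}, so the match with \eqref{qvnk} is fine, but the phrase ``any one fixed path'' should be ``the weight-$1$ path $(0,0)\to(0,1)\to\cdots\to(0,k)\to(1,k)\to\cdots\to(n-k,k)$''.
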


\begin{proof}
Let $S$ be a central Markov chain on $\Ga$ with probability law $\prob$.
Observe that the property in Definition \ref{centr} means precisely that the
one-step {\it backward\/} transition probabilities (that is, transition
probabilities in the inverse time) are of the standard form
\begin{gather}
\prob\{S_{n-1}=(n-1,k)\mid S_n=(n,k)\}=\frac{d_{n-1,k}}{d_{n,k}}
=\frac{[n-k]}{[n]}\label{pro1}\\
\prob\{S_{n-1}=(n-1,k-1)\mid S_n=(n,k)\}
=\frac{d_{n-1,k-1}q^{n-k}}{d_{n,k}}=q^{n-k}\frac{[k]}{[n]}\label{pro2}
\end{gather}
for every such $S$.

Introduce the notation
\begin{equation}\label{tvnk}
\tv_{n,k}:={\mathbb P}\{S_n=(n-k,k)\}, \qquad (n-k,k)\in \Gamma.
\end{equation}
Consistency of the distributions of $S_n$'s amounts to the rule of total
probability
\begin{multline}\label{backw}
\tv_{n,k}=\prob\{S_{n}=(n,k)\mid
S_{n+1}=(n+1,k)\}\tv_{n+1,k}\\+\prob\{S_{n}=(n,k)\mid
S_{n+1}=(n+1,k+1)\}\tv_{n+1,k+1}.
\end{multline}
Rewriting \eqref{backw}, using \eqref{pro1} and \eqref{pro2},  and setting
\begin{equation}\label{rel}
v_{n,k}=d_{n,k}^{-1}\tv_{n,k}
\end{equation}
we get \eqref{dual}, which means that $v\in\V$. Thus, we have constructed the
correspondence $\prob\mapsto v$.

Conversely, start with a  solution $v\in\V$ and pass to $\tv=(\tv_{n,k})$
according to \eqref{rel}. For each $n$ consider the measure on $\Gamma_n$ with
weights $\tv_{n,0},\dots,\tv_{n,n}$. Since the weight of the root is $1$, it
follows from \eqref{dual} by induction in $n$ that these are probability
measures. Again by \eqref{dual}, the marginal measures are consistent with the
backward transition probabilities, hence determine the probability law of a
central Markov chain on $\Gamma(q)$. Thus, we get the inverse correspondence
$v\mapsto\prob$.
\end{proof}

By virtue of Propositions \ref{prop2A} and \ref{prop2B}, the law of
$q$-exchangeable infinite binary sequence is determined by some $v\in\V$,  with
the entries $v_{n,k}$ having  the same meaning as in \eqref{qvnk}. In the
sequel this law will be sometimes denoted $\prob_v$.

\section{The boundary problem}\label{3}

The set $\V$ is a Choquet simplex, meaning a convex set which is compact in the
product topology of the space of functions on $\Gamma$ and has the property of
uniqueness of the barycentric decomposition of each $v\in\V$ over the set of
extreme elements of $\V$ (see, e.~g., \cite[Proposition 10.21]{Goodearl}).

The {\it boundary problem\/} for the $q$-Pascal graph amounts to describing
extreme nonnegative solutions to the recursion (\ref{dual}). Each extreme
solution $v\in\V$ corresponds to ergodic process $(S_n)$ for which the tail
sigma-algebra is trivial.
In this context, the set of extremes is also known as
{\it the minimal boundary}.

With each array  $v\in\V$, $v=(v_{n,k})$, it is convenient to associate another
array $\tilde{v}=(\tilde{v}_{n,k})$ related to $v$ via (\ref{rel}). Clearly,
the mapping $v\leftrightarrow\tilde{v}$ is an isomorphism of two Choquet
simplexes $\V$ and $\tV=\{\tilde{v}\}$. Recall that the meaning of the
quantities $\tv_{n,k}$ is explained in \eqref{tvnk}.

A common approach to the boundary problem calls for identifying a {\rm
possibly} larger {\it Martin boundary} (see \cite{KOO}, \cite{Gibbs},
\cite{Euler} for applications of the method). To this end, we need to consider
multistep backward transition probabilities, which by Lemma \ref{dim}  are
given by a $q$-analogue of the hypergeometric distribution
\begin{multline}\label{multist}
\tilde v_{n,k}(\nu,\varkappa):={\mathbb P}\{S_n=(n-k,k)\,|\,S_\nu=(\nu-\varkappa,\varkappa)\}\\
=q^{(\varkappa-k)(n-k)}\, { \left[\!\!
\begin{array}{c} \nu-n\\ \varkappa-k \end{array}\!\!\right] \left[\!\!
\begin{array}{c} n\\ k \end{array}\!\!\right] \bigg/ \left[\!\!
\begin{array}{c} \nu\\ \varkappa \end{array}\!\!\right]},~~~~k=0,\ldots,n,
\end{multline}
and to examine the limiting regimes for $\varkappa=\varkappa(\nu)$ as
$\nu\to\infty$,  under which the probabilities (\ref{multist}) converge for all
{\rm fixed} $(n-k,k)\in\Gamma$. If the limits exist, the limiting array
$$
\quad \tilde v_{n,k}:=\lim_{(\nu,\varkappa)} \tilde v_{n,k}(\nu,\varkappa)
$$
belongs necessarily  to $\tV$.

Suppose $0<q<1$ and introduce polynomials
\begin{equation}\label{defPhi}
\Phi_{n,k}(x):=q^{-k(n-k)}x^{n-k}(x,q^{-1})_k, \qquad
\Phit_{n,k}=d_{n,k}\Phi_{n,k}\,, \qquad 0\le k\le n.
\end{equation}
Obviously, the degree of $\Phi_{n,k}$ is $n$; we will consider the polynomial
as a function on $\Delta_q$. Observe also that $\Phi_{n,k}(x)$ vanishes at
points $x=q^\ka$ with $\ka<k$, because of vanishing of $(x,q^{-1})_k$.

\begin{lemma}\label{MB}
Suppose $0<q<1$, and let in {\rm (\ref{multist})}  the indices $n$ and $k$
remain fixed, while $\nu\to\infty$ and $\varkappa=\varkappa(\nu)$ varies in
some way with $\nu$. Then the limit of \eqref{multist} is $\Phit_{n,k}(q^\ka)$
if $\ka$ is constant for large enough $\nu$. If $\ka\to\infty$ then the limit
is $\Phit_{n,k}(0)=\delta_{n,k}$.
\end{lemma}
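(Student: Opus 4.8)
The plan is to rewrite the $q$-hypergeometric weight \eqref{multist} as a simple prefactor times a ratio of $q$-binomial coefficients, and then pass to the limit factor by factor, using only the elementary fact that for $0<q<1$ one has $[m]=\frac{1-q^m}{1-q}\to\frac{1}{1-q}$ as $m\to\infty$, while $[m]$ stays fixed for fixed $m$. First I would cancel $q$-factorials to obtain the product form
\[
\frac{\left[\!\!\begin{array}{c} \nu-n\\ \varkappa-k \end{array}\!\!\right]}
     {\left[\!\!\begin{array}{c} \nu\\ \varkappa \end{array}\!\!\right]}
=\frac{\prod_{j=0}^{k-1}[\varkappa-j]\;\prod_{j=0}^{n-k-1}[\nu-\varkappa-j]}
       {\prod_{j=0}^{n-1}[\nu-j]},
\]
so that \eqref{multist} equals $q^{(\varkappa-k)(n-k)}\,d_{n,k}$ times this ratio. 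The key bookkeeping observation is that numerator and denominator carry the \emph{same} number $n$ of $q$-integer factors (namely $k$ plus $n-k$ in the numerator, $n$ in the denominator).

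In the constant regime $\varkappa\equiv\ka$ I let $\nu\to\infty$. Each of the $n$ denominator factors $[\nu-j]$ and each of the $n-k$ factors $[\nu-\varkappa-j]$ tends to $(1-q)^{-1}$, while the $k$ factors $[\ka-j]$ remain fixed. Collecting the surviving powers of $(1-q)$, the ratio converges to $(1-q)^{k}\prod_{j=0}^{k-1}[\ka-j]=\prod_{j=0}^{k-1}(1-q^{\ka-j})=(q^\ka,q^{-1})_k$. Multiplying by the prefactor $q^{(\ka-k)(n-k)}d_{n,k}$ and comparing with \eqref{defPhi}, where $q^{-k(n-k)}(q^\ka)^{n-k}=q^{(\ka-k)(n-k)}$, reproduces exactly $\Phit_{n,k}(q^\ka)$.

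In the regime $\ka\to\infty$ I would separate the cases $k=n$ and $k<n$. For $k=n$ the prefactor is $1$, there are no factors $[\nu-\varkappa-j]$, and both $\prod_{j=0}^{n-1}[\ka-j]$ and $\prod_{j=0}^{n-1}[\nu-j]$ tend to $(1-q)^{-n}$, so the limit is $1=\Phit_{n,n}(0)$. For $k<n$ the prefactor $q^{(\varkappa-k)(n-k)}\to0$ since $n-k\ge1$, while the ratio stays bounded: whenever \eqref{multist} is nonzero one has $\nu-\varkappa\ge n-k$, hence every numerator argument $\nu-\varkappa-j\ge1$, so each numerator factor satisfies $[m]\le(1-q)^{-1}$ and the numerator is at most $(1-q)^{-n}$; each denominator factor $[\nu-j]\ge1$ for $\nu\ge n$, so the denominator is at least $1$ and the ratio is bounded by $(1-q)^{-n}$. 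Thus the product tends to $0=\Phit_{n,k}(0)$, and the two cases combine into $\Phit_{n,k}(0)=\delta_{n,k}$.

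The only genuinely delicate point is the last estimate: because both $\nu$ and $\varkappa$ grow without control, I must rule out the ratio of $q$-binomials blowing up and cancelling the vanishing prefactor. This is precisely where the balanced factor count enters, confining the whole ratio to the bounded interval $(0,(1-q)^{-n}]$ and letting the geometric factor $q^{(\varkappa-k)(n-k)}$ dictate the limit.
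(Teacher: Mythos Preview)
Your argument is correct and, for the eventually-constant regime $\varkappa\equiv\ka$, it is exactly the paper's computation: both factor the ratio of $q$-binomials as
\[
\frac{[\nu-n]!}{[\nu]!}\cdot\frac{[\nu-\varkappa]!}{[\nu-\varkappa-(n-k)]!}\cdot\frac{[\varkappa]!}{[\varkappa-k]!}
\]
and send $\nu\to\infty$ using $[m]\to(1-q)^{-1}$.

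The one genuine difference is in the regime $\varkappa\to\infty$ for $k<n$. The paper never bounds anything: it simply notes that the numbers $\tilde v_{n,0}(\nu,\varkappa),\dots,\tilde v_{n,n}(\nu,\varkappa)$ form a probability distribution on $\{0,\dots,n\}$, so once the $k=n$ term is shown to tend to $1$ (which you also do), the remaining terms are forced to $0$. You instead establish the explicit bound $(1-q)^{-n}$ on the $q$-binomial ratio and let the prefactor $q^{(\varkappa-k)(n-k)}$ drive the product to zero. Your route is more hands-on and self-contained; the paper's is softer and sidesteps the estimate entirely by exploiting the probabilistic meaning of \eqref{multist}.
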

\begin{proof}
 Assume first $\varkappa\to\infty$ and show that the limit of
(\ref{multist}) is $\delta_{nk}$. Since the quantities $\tilde
v_{n,k}(\nu,\varkappa)$, where $k=0,\dots,n$, form a probability distribution,
it suffices to check that the limit exists and is equal to 1 for $k=n$. In this
case the right--hand side of (\ref{multist}) becomes
$$
\prod_{i=1}^n \frac{[\varkappa-n+i]}{[\nu-n+i]}\,.
$$
Because $\lim_{m\to\infty}[m]=1/(1-q)$ for $q<1$, this indeed converges to $1$
provided that  $\varkappa\to\infty$.

Now suppose $\varkappa$ is fixed for all large enough $\nu$. The {\rm
right-hand side} of (\ref{multist}) is $0$ for $k>\varkappa$. For
$k\leq\varkappa$
 using $\lim_{m\to\infty}[m-j]!/[m]!=(1-q)^j$ we obtain
\begin{multline}
{ \left[\!\! \begin{array}{c} \nu-n\\ \varkappa-k \end{array}\!\!\right] \bigg/
\left[\!\! \begin{array}{c} \nu\\ \varkappa \end{array}\!\!\right]}=
\frac{[\nu-n]!}{[\nu]! }\,\,\frac{ [\nu-\varkappa]!}
{[\nu-\varkappa-(n-k)]!}\,\,\frac{[\varkappa]!}{[\varkappa-k]!}\\
\to \frac{(1-q)^k[\varkappa]!}{[\varkappa-k]!}=\Phit_{n,n}(q^\ka).
\end{multline}
\end{proof}

Part (i) of the next theorem  appeared in \cite[Chapter 1, Section 4, Corollary
6]{Kerov}. Kerov pointed out that the proof could be concluded from the
Kerov-Vershik `ring theorem' (see \cite[Section 8.7]{Zigzag}), but did not give
details.

For $\mu$ a measure, we shall write $\mu(x)$ instead of $\mu(\{x\})$, meaning
atomic mass at $x$.
\begin{theorem}\label{main}
Assume $0<q<1$.

{\rm(i)} The formulas
$$
\tv_{n,k}=\sum_{x\in\De_q}\Phit_{n,k}(x)\mu(x), \qquad
v_{n,k}=\sum_{x\in\De_q}\Phi_{n,k}(x)\mu(x)
$$
establish a linear homeomorphism  between the set $\tV$ (respectively,  $\V$)
and the set of all probability measures $\mu$ on $\De_q$.

{\rm(ii)} Given $\tv\in\tV$, the corresponding measure
$\mu$ is determined by
$$
\mu(q^\ka)=\lim_{\nu\to\infty}\tv_{\nu,\ka}, \qquad \ka=0,1,\dots;
\qquad \mu(0)=1-\sum_{\ka\in\{0,1,\ldots\}} \mu(q^\ka).
$$
\end{theorem}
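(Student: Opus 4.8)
The plan is to realise the stated correspondence as a single linear map $R\colon\mu\mapsto\tv$, given by $\tv_{n,k}=\sum_{x\in\De_q}\Phit_{n,k}(x)\mu(x)$, and then show it is a continuous bijection between two compact convex sets. First I would check that $R$ is well defined and lands in $\tV$. For each fixed $x\in\De_q$ a short calculation from \eqref{defPhi} — using $\Phi_{n+1,k}(x)=q^{-k}x\,\Phi_{n,k}(x)$ and $\Phi_{n+1,k+1}(x)=q^{-(n-k)}(1-xq^{-k})\,\Phi_{n,k}(x)$, whose sum weighted as in \eqref{dual} collapses to $\Phi_{n,k}(x)$ — shows that the array $(\Phi_{n,k}(x))$ solves recursion \eqref{dual}; it is nonnegative on $\De_q$ (the factor $(x,q^{-1})_k$ vanishes at $x=q^\ka$ with $\ka<k$ and is positive otherwise), and $\Phi_{0,0}(x)=1$. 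Hence $\Phit_{\cdot}(x)=d_{\cdot}\Phi_{\cdot}(x)\in\tV$ for every $x$, and since $\tV$ is compact and convex, the barycenter $R(\mu)$ again lies in $\tV$, with $\tv_{0,0}=1$ because $\Phit_{0,0}\equiv1$. As each $\Phit_{n,k}$ is continuous on the compact set $\De_q$, the map $R$ is affine and continuous from the probability measures on $\De_q$ (weak topology) to $\tV$ (product topology); the $v$-version follows by dividing by $d_{n,k}$.

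The main work is surjectivity. Given $\tv\in\tV$, the total-probability relation behind \eqref{multist} reads $\tv_{n,k}=\sum_{\varkappa}\tv_{n,k}(\nu,\varkappa)\,\tv_{\nu,\varkappa}$ for every $\nu\ge n$. I would encode the level-$\nu$ law as the probability measure $M_\nu=\sum_{\varkappa}\tv_{\nu,\varkappa}\,\delta_{q^\varkappa}$ on $\De_q$, the image of the level under $(\nu-\varkappa,\varkappa)\mapsto q^\varkappa$, with the regime $\varkappa\to\infty$ accumulating at $0$. By weak compactness some subsequence $M_{\nu_j}$ converges to a limit $\mu$. To deduce $\tv_{n,k}=\sum_{x}\Phit_{n,k}(x)\mu(x)$ I compare $\sum_\varkappa\tv_{n,k}(\nu,\varkappa)\tv_{\nu,\varkappa}$ with $\int\Phit_{n,k}\,dM_\nu=\sum_\varkappa\Phit_{n,k}(q^\varkappa)\tv_{\nu,\varkappa}$; their difference is at most $\sup_\varkappa\bigl|\tv_{n,k}(\nu,\varkappa)-\Phit_{n,k}(q^\varkappa)\bigr|$. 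Lemma \ref{MB} supplies the pointwise limits, but the \emph{uniform} smallness of this supremum in $\varkappa$ is the crux and the step I expect to be hardest: for bounded $\varkappa$ uniformity is automatic, whereas for large $\varkappa$ I must show that $\tv_{n,k}(\nu,\varkappa)$ and $\Phit_{n,k}(q^\varkappa)$ are both close to $\delta_{n,k}=\Phit_{n,k}(0)$, uniformly in $\nu$. This rests on the asymptotics $[m]\to(1-q)^{-1}$, which makes the products in \eqref{multist} uniformly near $1$ once $\varkappa$ is large; it is precisely the point where one verifies that the Martin boundary does not exceed $\De_q$, i.e. that no mass escapes.

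Part (ii) then simultaneously identifies $\mu$ and gives injectivity. Starting from a representation $\tv_{\nu,\ka}=\sum_{x\in\De_q}\Phit_{\nu,\ka}(x)\mu(x)$, I evaluate $\Phit_{\nu,\ka}(q^j)$ as $\nu\to\infty$: it is $0$ for $j<\ka$ by vanishing of $(q^j,q^{-1})_\ka$; it tends to $1$ for $j=\ka$, since $\Phi_{\nu,\ka}(q^\ka)=\prod_{m=1}^{\ka}(1-q^m)$ is constant in $\nu$ while $d_{\nu,\ka}\to\prod_{m=1}^{\ka}(1-q^m)^{-1}$; it tends to $0$ for $j>\ka$ through a factor $q^{(j-\ka)(\nu-\ka)}\to0$; and $\Phit_{\nu,\ka}(0)=\delta_{\nu,\ka}=0$ for $\nu>\ka$. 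A uniform bound on $\Phit_{\nu,\ka}(q^j)$ legitimises interchanging limit and summation, yielding $\lim_{\nu\to\infty}\tv_{\nu,\ka}=\mu(q^\ka)$, and $\mu(0)=1-\sum_\ka\mu(q^\ka)$ by complementation. This recovery formula shows that $\mu$ is uniquely determined by $\tv$, so $R$ is injective and, in particular, the limit $\mu$ of the previous paragraph is independent of the chosen subsequence.

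Finally I would close the argument by soft topology: $R$ is an affine continuous bijection from the (weakly compact) space of probability measures on $\De_q$ onto $\tV$, which is compact in the product topology as a Choquet simplex isomorphic to $\V$. A continuous bijection between compact Hausdorff spaces is a homeomorphism, so $R$ is a linear homeomorphism, and the $v$-version follows via $v_{n,k}=d_{n,k}^{-1}\tv_{n,k}$ together with the isomorphism $\V\cong\tV$. The only genuinely delicate ingredient is thus the uniform control of the $q$-hypergeometric kernel \eqref{multist} underlying surjectivity; everything else is a direct $q$-calculation or a compactness argument.
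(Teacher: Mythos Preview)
Your plan is correct and follows the same Martin--boundary strategy that the paper uses: embed $\Gamma_\nu$ into $\De_q$ by $(\nu-\varkappa,\varkappa)\mapsto q^\varkappa$, invoke Lemma~\ref{MB} to identify the limits of the kernel \eqref{multist}, push forward the level-$\nu$ laws to measures $M_\nu$ on $\De_q$, and pass to a weak limit. The paper compresses all of this into a reference to \cite{KOO}, merely verifying three abstract hypotheses (a)~the embedding, (b)~the convergence supplied by Lemma~\ref{MB}, and (c)~density of the span of the $\Phit_{n,k}$ in $C(\De_q)$.

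The one substantive tactical difference is how injectivity of $R$ is obtained. The paper's condition~(c) says that the $\Phit_{n,k}$ span the polynomial ring, hence are dense in $C(\De_q)$; a probability measure on the compact set $\De_q$ is therefore determined by its integrals against the $\Phit_{n,k}$, and uniqueness of $\mu$ follows at once. You instead bypass density entirely, proving part~(ii) by a direct computation of $\lim_{\nu\to\infty}\Phit_{\nu,\ka}(q^j)$ and then reading injectivity off the recovery formula $\mu(q^\ka)=\lim_\nu\tv_{\nu,\ka}$. Your route is more explicit and self-contained (no black-box appeal to the \cite{KOO} machinery), and it yields~(ii) as a byproduct rather than as a separate consequence of the general theory; the paper's density argument is shorter once that machinery is granted and makes it transparent that the Martin boundary coincides with the minimal boundary. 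Your identification of the uniform-in-$\varkappa$ control of \eqref{multist} as the only delicate point is accurate, and your sketch of how to split into bounded $\varkappa$ versus large $\varkappa$ (both sides near $\de_{n,k}$, using $[m]\to(1-q)^{-1}$) is exactly what is needed.
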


\begin{proof}
As in  \cite{KOO}, the assertions  (i) and (ii) are consequences of the
following claims (a), (b) and (c).

(a) For each $\nu=0,1,2,\dots$, the vertex set $\Gamma_\nu$ is embedded into $\De_q$
via the map $(\nu,\ka)\mapsto q^\ka$. Observe that, as
$\nu\to\infty$, the image of $\Gamma_\nu$ in $\De_q$ expands and in
the limit exhausts the whole set $\De_q$, except point 0, which is a limit
point. In this sense, $\De_q$ is approximated by the sets
$\Gamma_\nu$ as $\nu\to\infty$.

(b) The multistep backward transition probabilities (\ref{multist}) converge to
$\Phit_{n,k}(q^\ka)$, for $0\leq \ka\leq \infty$,  in the regimes described by
Lemma \ref{MB}.

(c) The linear span of the functions $\Phit_{n,k}(x)$, $(n-k,k)\in\Gamma$, is the
space of all polynomials, so that it is dense in the Banach space $C(\De_q)$.
\end{proof}

Note that part (ii) of the theorem can be rephrased as follows: given
$\tv\in\tV$, consider the probability distribution on $\Gamma_n$ determined by
 $\tv_{n,\bullet}$ and take its pushforward under the embedding
$\Gamma_\nu\hookrightarrow\De_q$. The resulting probability measure
on $\De_q$ weakly converges to $\mu$ as $n\to\infty$.

\begin{corollary} For  $0<q<1$ we have:
\begin{itemize}
\item[\rm (i)] The extreme elements of\/ $\V$ are parameterised by the points
$x\in\De_q$ and have the form
\begin{equation}\label{Phi}
v_{n,k}=\Phi_{n,k}(x), \qquad 0\le k\le n.
\end{equation}
\item[\rm (ii)] The Martin boundary of the graph $\Ga(q)$ coincides with its
minimal boundary and can be identified with $\De_q\subset[0,1]$ via the
function $v\mapsto v_{1,0}$.
\end{itemize}
\end{corollary}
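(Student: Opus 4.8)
The plan is to deduce both parts directly from the affine isomorphism of Theorem \ref{main} together with the explicit limits of Lemma \ref{MB}, so that essentially no new computation is needed beyond one evaluation of $\Phi_{1,0}$.

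First, for (i), I would invoke Theorem \ref{main}(i): the assignment $\mu\mapsto v$ with $v_{n,k}=\sum_{x\in\De_q}\Phi_{n,k}(x)\,\mu(x)$ is a linear homeomorphism of the simplex of probability measures on $\De_q$ onto $\V$. Since $\De_q$ is a compact metrisable space, the extreme points of that probability simplex are exactly the Dirac masses $\delta_x$, $x\in\De_q$. An affine homeomorphism of Choquet simplexes carries extreme points bijectively onto extreme points, so the extreme elements of $\V$ are precisely the images of the $\delta_x$, i.e.\ the arrays $v_{n,k}=\Phi_{n,k}(x)$. These extreme arrays constitute, by definition, the minimal boundary.

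Next, for (ii), I would identify the Martin boundary as the set of all arrays arising as limits of the Martin kernel --- the multistep backward transition probabilities \eqref{multist} --- as $\nu\to\infty$. Lemma \ref{MB} enumerates all such limits: in every convergent regime the limit equals $\Phit_{n,k}(x)$ with $x=q^\ka\in\De_q$ when $\ka$ is eventually constant, or $x=0$ when $\ka\to\infty$. Passing through the isomorphism $\tV\leftrightarrow\V$ of \eqref{rel}, these are exactly the extreme arrays $\Phi_{n,k}(x)$, $x\in\De_q$, found in (i). Thus the Martin boundary is parameterised by $\De_q$; since the minimal boundary is always contained in the Martin boundary and both now coincide with $\De_q$, the two boundaries agree. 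To pin down the stated coordinate, I would evaluate \eqref{defPhi} at $(n,k)=(1,0)$: $\Phi_{1,0}(x)=q^{0}x\,(x,q^{-1})_0=x$, the empty product being $1$. Hence $v\mapsto v_{1,0}$ sends the extreme point attached to $x$ to $x$ itself, giving the homeomorphism of the boundary onto $\De_q\subset[0,1]$.

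The step requiring the most care is the exhaustiveness implicit in Lemma \ref{MB}: one must be certain that no convergent regime of \eqref{multist} falls outside the two cases listed there, so that the Martin boundary gains no spurious points beyond $\De_q$. This, combined with claim (c) in the proof of Theorem \ref{main} (the arrays $\Phit_{n,k}$ span all polynomials and hence separate the points of $\De_q$), is what prevents the Martin boundary from being strictly larger than the minimal one.
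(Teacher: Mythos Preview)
Your proof is correct and follows the same approach as the paper's own argument, which simply declares all claims immediate from Theorem \ref{main} and records the single computation $\Phi_{1,0}(x)=x$. You have filled in the details the paper omits---the affine-isomorphism argument for extremes, the role of Lemma \ref{MB} and claim (c) in ruling out extra Martin boundary points---but the underlying route is identical.
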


\begin{proof} All the claims are immediate. We only comment on the fact the
parameter $x\in\De_q$ is recovered as the value of $v_{1,0}$: this holds
because $\Phi_{1,0}(x)=x$.
\end{proof}
Letting $q\to 1$ we have a phase transition: the discrete boundary $\Delta_q$
becomes  more and more dense and eventually fills the whole of  $[0,1]$ at
$q=1$.

As is seen from \eqref{defPhi}, the polynomial $\Phi_{n,k}(x)$ can be viewed as a
$q$-analogue of the polynomial $x^{n-k}(1-x)^k$, so that \eqref{Phi} is a
$q$-analogue of \eqref{Haus}.
 Keep in mind that $x=q^\ka$ is a counterpart of $1-p$,
the probability of $\epsi_1=0$. The following $q$-analogue of the Hausdorff
problem of moments emerges. Introduce a modified  difference operator
acting on sequences $u=(u_l)_{l=0,1,\dots}$ as
$$
(\de_q u)_l=q^{-l}(u_{l}-u_{l+1}), \qquad l=0,1,\dots\,.
$$

\begin{corollary}
Assume $0<q<1$. A real sequence $u=(u_l)_{l=0,1,\dots}$ with $u_0=1$ is a
moment sequence of a probability measure $\mu$ supported by $\De_q\subset[0,1]$
if and only if $u$ is `$q$-completely monotone' in the sense that for every
$k=0,1,\dots$ we have componentwise
$$
\underbrace{\de_q\circ\dots\circ\de_q}_k u\ge0, \qquad k=0,1,\dots.
$$
\end{corollary}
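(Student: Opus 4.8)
The plan is to deduce this corollary directly from Theorem~\ref{main} by identifying the ``$q$-moments'' $u_l$ with the entries $v_{l,0}$ of a solution $v\in\V$. First I would observe that if $\mu$ is a probability measure on $\De_q$, then setting $u_l:=\sum_{x\in\De_q}\Phi_{l,0}(x)\mu(x)$ we have, from \eqref{defPhi}, $\Phi_{l,0}(x)=x^l$, so $u_l=\int_{\De_q} x^l\,\mu(\mathrm{d}x)$ is precisely the $l$-th moment of $\mu$, and $u_0=\mu(\De_q)=1$. Thus the assertion becomes: the sequence of moments $(u_l)$ of a probability measure on $\De_q$ is characterised by the $q$-complete monotonicity condition. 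The natural bijection to exploit is $u_l=v_{l,0}$, i.e.\ the leftmost column of the array $v$.

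Next I would connect the iterated operator $\de_q^{\,k}$ to the array entries $v_{n,k}$. The key computation is to show that applying the recursion \eqref{dual} in the form $v_{n,k}=v_{n+1,k}+q^{n-k}v_{n+1,k+1}$ lets one solve for $v_{n+1,k+1}$ and thereby express the full array by iterated ``$q$-differencing'' of the column $(v_{l,0})_l$. Concretely I expect the clean statement
\begin{equation*}
v_{l+k,\,k}=\bigl(\underbrace{\de_q\circ\dots\circ\de_q}_{k}\,u\bigr)_l,\qquad l,k=0,1,\dots,
\end{equation*}
to hold, where $u=(u_l)=(v_{l,0})$. This is the $q$-analogue of the passage from \eqref{dP} through \eqref{uv}--\eqref{de} in the introduction; indeed for $q=1$ the operator $\de_q$ reduces to the ordinary $\de$ and \eqref{dual} reduces to \eqref{dP}, so the identity specialises correctly. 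I would verify this by rewriting \eqref{dual} as $q^{n-k}v_{n+1,k+1}=v_{n,k}-v_{n+1,k}$, recognising the right-hand side (after the substitution $n-k=l$) as exactly $q^{l}(\de_q u^{(k)})_l$ with $u^{(k)}_l:=v_{l+k,k}$, and inducting on $k$ to peel off one factor of $\de_q$ at each level.

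With this identity in hand, the corollary falls out. A real sequence $u$ with $u_0=1$ determines, via the above formula, an array $v=(v_{n,k})$ that automatically satisfies \eqref{dual} with $v_{0,0}=1$; the nonnegativity of all entries $v_{n,k}\ge0$ (which is exactly the membership $v\in\V$) translates precisely into the componentwise inequalities $\de_q^{\,k}u\ge0$ for all $k$, i.e.\ into $q$-complete monotonicity. By Theorem~\ref{main}(i) the nonnegative solutions $v\in\V$ are in bijection with probability measures $\mu$ on $\De_q$ through $v_{n,k}=\sum_{x}\Phi_{n,k}(x)\mu(x)$, and restricting to $k=0$ recovers $u_l=\int x^l\mu(\mathrm dx)$ as noted. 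Hence $u$ is $q$-completely monotone if and only if it is the moment sequence of such a $\mu$.

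The step I expect to be the main obstacle is establishing the clean explicit formula $v_{l+k,k}=(\de_q^{\,k}u)_l$ and tracking the powers of $q$ correctly. The subtlety is that $\de_q$ carries the weight $q^{-l}$, while the recursion \eqref{dual} carries $q^{n-k}=q^{l}$; one must check these balance so that the level-$k$ array is obtained by exactly $k$ applications of $\de_q$ with no residual powers of $q$ contaminating the identity. Everything else is a routine unwinding of the bijection already furnished by Theorem~\ref{main}, so the entire proof should be short once this bookkeeping identity is pinned down.
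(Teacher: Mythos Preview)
Your proposal is correct and follows essentially the same route as the paper: set $u^{(k)}_l=v_{l+k,k}$, observe that \eqref{dual} rewrites as $u^{(k)}=\de_q u^{(k-1)}$ (your worried-about bookkeeping reduces to the one line $u^{(k+1)}_l=q^{-l}(u^{(k)}_l-u^{(k)}_{l+1})$), use $\Phi_{n,0}(x)=x^n$ to identify $v_{l,0}$ with the $l$th moment, and then invoke Theorem~\ref{main}(i). The paper's proof is literally this argument compressed into two sentences.
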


\begin{proof}
Using the notation $v_{l+k,k}=u^{(k)}_l$ as in \eqref{uv}, we see that the
recursion (\ref{dual}) is equivalent to $u^{(k)}=\de_q u^{(k-1)}$, cf.
\eqref{de}. Then we use the fact that $\Phi_{n,0}(x)=x^n$ and repeat in the
reverse order the argument of Section \ref{1}.
\end{proof}

\subsection*{\sf The case $q>1$.}

This case can be readily reduced to the case with parameter $0<\bar{q}<1$,
where $\bar{q}:=q^{-1}$.  It is convenient to adopt a more detailed notation
$[n]_q$ for the $q$-integers.

\begin{lemma}
For every $q>0,\, \bar{q}=q^{-1}$, the backward transition probabilities \eqref{pro1}, \eqref{pro2}
for the graph $\Gamma(q)$ and the dual graph $\Gamma^*(\bar{q})$ are the same.
\end{lemma}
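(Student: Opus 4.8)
The plan is to compute the two one-step backward transition probabilities for the dual graph $\Gamma^*(\bar q)$ explicitly, exactly as was done for $\Gamma(q)$ in the proof of Proposition \ref{prop2B}, and then to match them against \eqref{pro1}--\eqref{pro2} using a single elementary identity relating $\bar q$-integers to $q$-integers. I fix a level-$n$ vertex and describe it by its number $k$ of $1$'s, i.e.\ by the point $(n-k,k)$ in $(l,k)$-coordinates. In either graph the one-step backward probability of moving to a given ancestor is, by the centrality relation, the forward weight of the connecting edge times the ratio of the sum-of-weights functions at the two endpoints. For $\Gamma(q)$ this is precisely how \eqref{pro1}--\eqref{pro2} were obtained, the sum-of-weights being the $q$-binomial coefficient $d_{n,k}$ of Lemma \ref{dim}.

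For $\Gamma^*(\bar q)$ I would invoke the Remark following Lemma \ref{dim}: the sum of weights of paths from the root to $(n-k,k)$ is again a binomial coefficient, now the one formed with parameter $\bar q$ in place of $q$, while the edge $(l,k)\to(l,k+1)$ has weight $1$ and the edge $(l,k)\to(l+1,k)$ has weight $\bar q^{\,k}$. Carrying out the same ratio computation, the backward move that decreases the number of $1$'s (the reverse of a $k$-increment, whose forward weight is $1$) has probability $[k]_{\bar q}/[n]_{\bar q}$, and the backward move that leaves the number of $1$'s unchanged (the reverse of an $l$-increment, whose forward weight is $\bar q^{\,k}$) has probability $\bar q^{\,k}\,[n-k]_{\bar q}/[n]_{\bar q}$.

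The final step is to record the elementary identity $[m]_{\bar q}=q^{\,1-m}[m]_q$, immediate from $[m]_{\bar q}=1+q^{-1}+\dots+q^{-(m-1)}$ upon factoring out $q^{-(m-1)}$. Substituting it into the two $\Gamma^*(\bar q)$ expressions, the powers of $q$ collect cleanly: the ``fixed $k$'' probability becomes $[n-k]_q/[n]_q$, agreeing with \eqref{pro1}, and the ``$k\to k-1$'' probability becomes $q^{\,n-k}[k]_q/[n]_q$, agreeing with \eqref{pro2}. This yields the claimed coincidence.

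The computation is mechanical, so I expect no genuine obstacle beyond bookkeeping; the one point demanding care is the pairing of the two backward moves across the two graphs. Because passing to the dual graph interchanges the roles of the edge weights $1$ and $q^l$, it is tempting to match the wrong pair and thereby manufacture a spurious discrepancy. Committing throughout to the identification of a level-$n$ vertex by its number of $1$'s, and pairing moves according to whether they change that number, removes the ambiguity and makes the term-by-term agreement transparent.
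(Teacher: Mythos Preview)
Your proposal is correct and follows essentially the same route as the paper: the paper's own proof simply records that the claim reduces to the single identity $\dfrac{[n-k]_q}{[n]_q}=\bar q^{\,k}\,\dfrac{[n-k]_{\bar q}}{[n]_{\bar q}}$, which is exactly your ``fixed $k$'' verification (the other probability then matches automatically since each pair sums to $1$). Your presentation is more explicit, supplying the ingredient $[m]_{\bar q}=q^{1-m}[m]_q$ and checking both transitions directly, but the underlying argument is the same.
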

\begin{proof}

Indeed, by virtue of \eqref{pro1}, \eqref{pro2}, this is reduced to the
equality
$$
\frac{[n-k]_q}{[n]_q}=\bar{q}^{k}  \frac{[n-k]_{\bar{q}}}{[n]_{\bar{q}}}\,.
$$
\end{proof}

The lemma implies that the boundary problem for $q>1$ can be treated by passing
to $q^{-1}<1$ and changing $(l,k)$ to $(k,l)$. In terms of the  binary encoding
of the path, this means switching 0's with 1's.

Kerov  \cite[Chapter 1, Section 2.2]{Kerov}  gives more examples of `similar'
graphs, which have different edge weights but the same backward transition
probabilities.

\section{Examples}\label{4}

\subsection*{\sf A $q$-analogue of the Bernoulli process.} Our first example is a
description of the extreme $q$-exchangeable infinite binary sequences.

With each infinite binary sequence we associate some {\it $T$-sequence}
$(T_0,T_1,T_2,\dots)$ of nonnegative integers, where $T_j$ is the length of
$j$th run of 0's. That is to say, $T_0$ is the number of 0's before the first
1, $T_1$ is the number of 0's between the first and second 1's, $T_2$ is the
number of 0's between the second and third 1's, and so on. Clearly, this is a
bijection, i.e.  a binary sequence can be recovered from its $T$-sequence as
$$
(\,\underbrace{0,\dots,0}_{T_0}\,, 1,\underbrace{0,\dots,0}_{T_1}\,,1,
\underbrace{0,\dots,0}_{T_2}\,,1,\dots).
$$

If $q=1$, then the Bernoulli process with parameter $p$ has a simple
description in terms of the associated random $T$-sequence: all $T_i$ are
independent and have the same geometric distribution with parameter $1-p$.

\begin{proposition}
Assume $0<q<1$. For $x\in\De_q$, let $v(x)=(v_{n,k}(x))$ be the extreme element
of $\V$ corresponding to $x$. Consider $q$-exchangeable infinite binary
sequence $\epsi=(\epsi_1,\epsi_2,\dots)$ under the probability law\/
$\prob_{v(x)}$ and let $(T_0, T_1,\dots)$ be the associated random
$T$-sequence.

{\rm(i)} If $x=q^\ka$ with $\ka=1,2,\dots$ then $T_0,\dots,T_{\ka-1}$ are
independent, $T_\ka\equiv\infty$, and $T_i$ has geometric distribution
with parameter $q^{\ka-i}$ for  $0\le i\le\ka-1$.

{\rm(ii)} If $x=1$ then $T_0\equiv\infty$, which means that  with probability
one $\epsi$ is the sequence $(0,0,\ldots)$ of only $0$'s.

{\rm(iii)} If $x=0$ then $T_0\equiv T_1\equiv \dots\equiv0$, which means that
with probability one $\epsi$ is the sequence $(1,1,\ldots)$ of only $1$'s.
\end{proposition}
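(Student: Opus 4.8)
The plan is to reduce the whole statement to the explicit formula $v_{n,k}(x)=\Phi_{n,k}(x)$ for the extreme elements, after first translating the probability of an \emph{arbitrary} initial binary word into the $v_{n,k}$ by means of $q$-exchangeability. The basic tool is the following consequence of Definition \ref{q-exch}: for any finite word $w\in\{0,1\}^n$ with exactly $k$ ones one has $\prob(w)=q^{\,\mathrm{inv}(w)}\,v_{n,k}$, where $\mathrm{inv}(w)$ is the number of pairs $(i,j)$, $i<j$, with $w_i=0$ and $w_j=1$. Indeed, the word $1^k0^{n-k}$ is the unique representative of its rearrangement class with $\mathrm{inv}=0$, and its probability is $v_{n,k}$ by \eqref{qvnk}; each adjacent transposition $(1,0)\to(0,1)$ multiplies the probability by $q$ and raises $\mathrm{inv}$ by exactly $1$, so $\prob(w)\,q^{-\mathrm{inv}(w)}$ is constant on each rearrangement class and equals its value $v_{n,k}$ at the reference word.

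Next I would express $\mathrm{inv}$ through the $T$-sequence. The event $\{T_0=t_0,\dots,T_{k-1}=t_{k-1}\}$ is precisely the event that the initial word equals $w=0^{t_0}1\,0^{t_1}1\cdots 0^{t_{k-1}}1$, and since each of the $t_0+\dots+t_{j-1}$ zeros preceding the $j$th one produces one inversion, $\mathrm{inv}(w)=\sum_{i=0}^{k-1}(k-i)\,t_i$. Substituting $v_{n,k}=\Phi_{n,k}(q^\ka)$ with $n-k=s:=\sum_i t_i$, the prefactor $q^{-ks}$ and the factor $x^{\,n-k}=(q^\ka)^s$ from \eqref{defPhi} combine with $q^{\mathrm{inv}(w)}$ into $q^{\sum_i(\ka-i)t_i}$, while $(q^\ka,q^{-1})_k=\prod_{j=0}^{k-1}(1-q^{\ka-j})$, so the probability collapses to the product
$$
\prob(T_0=t_0,\dots,T_{k-1}=t_{k-1})=\prod_{i=0}^{k-1}(1-q^{\ka-i})\,(q^{\ka-i})^{t_i}.
$$
For $x=q^\ka$ with $\ka\ge1$ and $0\le i\le\ka-1$ each base $q^{\ka-i}$ lies in $(0,1)$, so the right-hand side is a product of geometric laws; this yields at once the independence of $T_0,\dots,T_{\ka-1}$ and the parameters $q^{\ka-i}$ asserted in (i), and summing over all $t_0,\dots,t_{\ka-1}$ gives total mass $1$.

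It remains to establish $T_\ka\equiv\infty$, which is the only genuinely new point. Applying the same product formula with $k=\ka+1$ introduces the factor $(1-q^{\ka-\ka})=1-q^0=0$, so $\prob(T_0=t_0,\dots,T_\ka=t_\ka)=0$ for every choice of the $t_i$; hence there are at most $\ka$ ones almost surely, and together with the total mass $1$ already accounted for by exactly $\ka$ finite runs this forces exactly $\ka$ ones, i.e.\ $T_\ka\equiv\infty$. Parts (ii) and (iii) are the degenerate endpoints: for $x=1=q^0$ the vanishing factor $1-q^0$ already kills $k=1$, so no ones occur and the sequence is identically $0$; for $x=0$ formula \eqref{defPhi} gives $\Phi_{n,k}(0)=\de_{n,k}$, whence only $1^n$ carries positive probability and the sequence is identically $1$. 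The main obstacle is the exact bookkeeping of the inversion statistic and the cancellation of the $q$-powers, so that the joint law factorizes cleanly; once that is in place, the vanishing of the $1-q^0$ factor does all the remaining work.
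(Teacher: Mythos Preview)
Your proof is correct and takes a genuinely different route from the paper's. The paper argues via the Markov chain $S=(S_n)$: it computes the forward transition probability
\[
\prob\{S_{n+1}=(n+1-k,k)\mid S_n=(n-k,k)\}
=\frac{d_{n,k}}{d_{n+1,k}}\cdot\frac{d_{n+1,k}\,\Phi_{n+1,k}(q^\ka)}{d_{n,k}\,\Phi_{n,k}(q^\ka)}
=q^{\ka-k},
\]
which depends only on $k$; independence and the geometric law of each $T_i$ then drop out of the Markov property, and $T_\ka\equiv\infty$ follows because this probability is $1$ when $k=\ka$. Your approach bypasses the transition-probability calculation entirely: you first establish the combinatorial identity $\prob(w)=q^{\mathrm{inv}(w)}v_{n,k}$ for any word $w$ with $k$ ones, compute $\mathrm{inv}$ for the specific word $0^{t_0}1\cdots 0^{t_{k-1}}1$, and then check that the $q$-powers from $\mathrm{inv}(w)$, from $q^{-k(n-k)}$, and from $x^{n-k}$ in $\Phi_{n,k}(q^\ka)$ conspire to give the product $\prod_i(1-q^{\ka-i})(q^{\ka-i})^{t_i}$. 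What this buys you is a direct, self-contained derivation of the full joint law of $(T_0,\dots,T_{k-1})$ in one stroke, without invoking the Markov chain $S$ at all; the paper's argument is shorter (a single ratio) but leans on the Markov-chain machinery of Section~\ref{2}. Your handling of $T_\ka\equiv\infty$ via the vanishing factor $1-q^0$ and the total-mass-one check is clean and matches the paper's conclusion.
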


\begin{proof}
Consider the central Markov chain $S=(S_n)$ corresponding to the extreme element
$v(q^\ka)$. Computing the forward transition probabilities, from (\ref{Phi})
and (\ref{pro1}), for $0\leq k\leq\ka$ we have
\begin{multline}
{\mathbb P}\{S_{n+1}=(n+1-k,k) \mid S_n=(n-k,k)\}\\
=\frac{(q^{n+1-k}-1)}{(q^{n}-1)}\, \frac{d_{n+1,k}\,\Phi_{n+1,k}(q^\varkappa)}
{d_{n,k}\,\Phi_{n,k}(q^\varkappa)} = q^{\varkappa-k}.
\end{multline}
This implies (i) and (ii). In the limit case $x=0$ corresponding to
$\ka\to +\infty$, the above probability equals 0, which entails (iii).
\end{proof}

The analogy with the Bernoulli process is evident
from the above description of the binary sequence $\epsi(q^\ka)$.
Moreover, the Bernoulli process
appears  as a limit. Indeed, fix $p\in (0,1)$ and suppose $\ka$
varies with $q$, as $q\uparrow1$, in such a way that
$$
\ka\sim \frac{-\log(1-p)}{1-q}.
$$
In this limiting regime, $q^{\ka-k}\to 1-p$ for every $k$, hence $(T_0,T_1,
\dots)$ weakly converges to an infinite sequence of i.i.d. geometric variables
with parameter $1-p$, and the random binary sequence $\epsi(q^\ka)$ converges
in distribution to the Bernoulli process with the frequency of 0's equal to
$1-p$.

\subsection*{\sf Another $q$-analogue of Bernoulli process.}

Following \cite{Kerov}, another $q$-analogue  of Bernoulli process is suggested
by the $q$-binomial formula (see \cite{Kac})
$$
(-\theta,q)_n= \sum_{k=0}^n q^{k(k-1)/2} \left[\!\! \begin{array}{c} n\\ k
\end{array}\!\!\right] \theta^k.
$$
For $\theta\in [0,\infty]$ we
define a probability law $\prob_{w^\theta}$ for $S=(S_n)$ by
setting
\begin{equation}\label{vtheta}
w_{n,k}^\theta: = \frac{\theta^k q^{k(k-1)/2}}{(-\theta,q)_n}\,,\quad
\prob_{w^\theta}\{S_n=(n-k,k)\}:= d_{n,k} w_{n,k}^\theta, \quad (n,k)\in
\Gamma.
\end{equation}
Checking (\ref{dual}) is immediate.
Computing forward transition probabilities,
$$
{\mathbb P}_{w^\theta}\{S_{n+1}=(n+1-k,k)\,\giv\, S_n=(n-k,k)\}=1/(1+\theta q^n),
$$
shows that under $\prob_{w^\theta}$ the process $S_n=(n-K_n,K_n)$ has
independent inhomogeneous increments, with probability  $\theta
q^{n-1}/(1+\theta q^{n-1})$  for increment $K_n-K_{n-1}=1$. For $q=1$ we are
back to the ergodic Bernoulli process, but for $0<q<1$ the process is not
extreme. To obtain the barycentric decomposition of $w^\theta$ over extremes,
$$w^\theta=\sum_{0\leq\varkappa\leq\infty} v^\ka \mu(q^\ka),$$
we can apply Theorem \ref{main}(ii) to compute from (\ref{vtheta})
$$
\mu(q^\ka)=\lim_{n\to\infty}\prob_{w^\theta}\{S_n=(n-\ka,\ka)\}
=\frac{1}{(-\theta,q)_\infty}\,
\frac{q^{\ka(\ka-1)/2}\theta^\ka}{(1-q)^\ka[\ka]!}.
$$
This measure $\mu$ may be viewed as a $q$-analogue of the Poisson distribution.

\subsection*{\sf A $q$-analogue of P\'olya's urn process.}

The conventional P\'olya's urn process is described in \cite[Section
7.4]{Feller}. Here we provide its natural deformation.

Fix $a,\,b>0$ and $0<q<1$. Consider the Markov chain $(S_n)$ on $\Ga$ with the
forward transition probabilities from $(n-k,k)$ to $(n+1-k,k)$ and from
$(n-k,k)$ to $(n-k,k+1)$ given by
$$
\frac{[b+n-k]}{[a+b+n]} \quad {\rm and} \quad \frac{[a+k]}{[a+b+n]}\,q^{n-k+b},
$$
respectively. Then the distribution at time $n$ is
\begin{multline}\label{PU}
{\mathbb P}\{S_n=(n-k,k)\}= \left[\!\! \begin{array}{c} n\\ k
\end{array}\!\!\right]
q^{bk}\\
\times\frac{[a][a+1]\cdots[a+k-1][b][b+1]\cdots[b+n-k-1]}
{[a+b][a+b+1]\cdots[a+b+n-1]}.
\end{multline}
Checking consistency (\ref{dual}) is easy. The conventional P\'olya's urn
process appears in the limit $q\to 1$. The corresponding probability measure
$\mu$ is computable  from Theorem \ref{main}(ii) as
$$
\lim_{n\to\infty}\prob\{S_n=(n-\ka,\ka)\}
$$
For $a=1$, the limit distribution of the coordinate $\ka$ is geometric with
parameter $1-q^b$. For general $a,b$ we obtain a measure on $\Delta_q$
$$
\mu(q^{\ka})=\frac{(q^a,q)_\ka(q^b,q)_\infty}{(q,q)_\ka
(q^{a+b};q)_\infty}\,q^{\ka b}, \qquad q^\ka\in\De_q,
$$
which may be viewed as a $q$-analogue of the beta distribution on $[0,1]$.

\section{Grassmannians over a finite field}\label{5}

For $q$ a power of a prime number, let $\F_q$ be the Galois field with $q$
elements. Define $V_n$ to be the $n$-dimensional space of sequences
$(\xi_1,\xi_2,\ldots)$ with entries from $\F_q$, which satisfy $\xi_i=0$ for
$i>n$. The spaces $\{0\}=V_0\subset V_1\subset V_2\subset\dots$ comprise  a
complete flag, and the union  $V_\infty:=\cup_{n\geq 0}V_n$ is a countable,
infinite-dimensional space over $\F_q$.

By the {\it Grassmannian\/} $\Gr(V_\infty)$ we mean the set of all vector
subspaces $X\subseteq V_\infty$. Likewise, for $n\geq 0$ let $\Gr(V_n)$ be the
set of all vector subspaces in $V_n$, with  $\Gr(V_0)$ being a singleton.
Consider the projection $\pi_{n+1,n}:\Gr(V_{n+1})\to\Gr(V_n)$ which sends a
subspace of $V_{n+1}$ to its intersection with $V_n$.

\begin{lemma}\label{lemma1}
There is a canonical  bijection
 $X\leftrightarrow (X_n)$ between the Grassmannian $\Gr(V_\infty)$ and
the set of sequences $(X_n\in\Gr(V_n),~ n\geq 0)$ satisfying the consistency
condition $X_n=\pi_{n+1,n}(X_{n+1})$ for each $n$.
\end{lemma}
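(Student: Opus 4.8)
The plan is to write down the bijection and its inverse explicitly, and then to verify that the two maps compose to the identity in both directions; the heart of the matter is a single structural observation about the consistency condition.

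First I would define the forward map $X\mapsto(X_n)$ by setting $X_n:=X\cap V_n$. Since the intersection of two subspaces is a subspace, each $X_n$ lies in $\Gr(V_n)$. The consistency condition is then immediate from $V_n\subseteq V_{n+1}$: one has $\pi_{n+1,n}(X_{n+1})=X_{n+1}\cap V_n=(X\cap V_{n+1})\cap V_n=X\cap V_n=X_n$. This shows the forward map is well defined into the set of consistent sequences.

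Next I would construct the candidate inverse. The crucial point is that the consistency relation forces the sequence to be increasing: from $X_n=X_{n+1}\cap V_n$ one reads off $X_n\subseteq X_{n+1}$. Thus $X_0\subseteq X_1\subseteq X_2\subseteq\cdots$ is a nested chain of subspaces, and its union $X:=\bigcup_{n\geq0}X_n$ is again a subspace of $V_\infty$, since an increasing union of subspaces is closed under addition and scalar multiplication. This gives the map $(X_n)\mapsto X$.

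Finally I would check the two compositions. For $X\mapsto(X\cap V_n)\mapsto\bigcup_n(X\cap V_n)$, equality with $X$ follows because $V_\infty=\bigcup_n V_n$, so every vector of $X$ lies in some $X\cap V_n$. For the other composition, $(X_n)\mapsto\bigcup_m X_m\mapsto\bigl(\bigcup_m X_m\bigr)\cap V_n$, I would use that $\bigl(\bigcup_m X_m\bigr)\cap V_n=\bigcup_m(X_m\cap V_n)$, together with $X_m\cap V_n=X_n$ for all $m\geq n$ and $X_m\cap V_n=X_m\subseteq X_n$ for $m\leq n$; hence the intersection recovers exactly $X_n$. The only point requiring a little care — rather than a genuine obstacle — is the iterated identity $X_m\cap V_n=X_n$ for $m\geq n$, which follows by a short induction on $m$ from $V_n\subseteq V_{n+1}$ and the one-step consistency relation. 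Everything else is routine bookkeeping with nested subspaces.
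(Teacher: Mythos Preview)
Your proposal is correct and follows exactly the same approach as the paper: the paper's proof simply names the two maps $X\mapsto(X\cap V_n)$ and $(X_n)\mapsto\bigcup_n X_n$ in a single sentence, while you additionally spell out the routine verifications that the consistency condition forces the chain to be increasing and that the two compositions are the identity. There is nothing to correct.
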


\begin{proof}
Indeed, the mapping $X\mapsto(X_n)$ is given by setting $X_n=X\cap V_n$ for
each $n$, while the  mapping $(X_n)\mapsto X$ is defined by $X=\cup X_n$.
\end{proof}

The lemma shows that $\Gr(V_\infty)$ can be identified with a projective limit
of the finite sets $\Gr(V_n)$, the projections being the maps $\pi_{n+1,n}$.
Using this identification we endow $\Gr(V_\infty)$ with the corresponding
topology, in which $\Gr(V_\infty)$ becomes a totally disconnected compact
space. For $X\in\Gr(V_\infty)$, a fundamental system of its neighborhoods is
comprised of the sets of the form $\{X'\in\Gr(V_\infty):~ X'_n=X_n\}$, where
$n=1,2,\dots$\,.

Let $\G_n=GL(n,\F_q)$ be the group of invertible linear
transformations of the space $V_n$, realised as the group of transformations
of $V_\infty$ which may only change the first $n$ coordinates.
We have then
$\{e\}=\G_0\subset\G_1\subset\G_2\subset\dots$ and we define
$\G_\infty:=\cup\G_n$.
The countable group $\G_\infty$  consists of
infinite invertible matrices $(g_{ij})$,
such that $g_{ij}=\delta_{ij}$ for large enough  $i+j$.
The group  $\G_\infty$ acts on $V_\infty$
hence also acts on $\Gr(V_\infty)$.

A probability distribution on $\Gr(V_\infty)$ defines a random subspace of
$V_\infty$. We look at random subspaces of $V_\infty$ whose distribution is
invariant under the action of $\G_\infty$. Observe that the action of $\G_n$
splits $\Gr(V_n)$ into orbits
$$
G(n,k)=\{X\in \Gr(V_n),\, \dim X=k\}, ~~0\leq k\leq n,
$$
where $\#G(n,k)=d_{n,k}$ is the number of $k$-dimensional subspaces of $V_n$.
Therefore, a probability  distribution on $\Gr(V_\infty)$ is
$\G_\infty$-invariant if and only if the conditional distribution on each
$G(n,k)$ is uniform.

It must be clear that this setting of `$q$-exchangeability' of linear spaces is
analogous to the framework of de Finetti's theorem: exchangeability of a random
binary sequence means that the conditional measure is uniform on sequences of
length $n$ with $k$ 1's. See \cite{Aldous}, \cite{diaconis} for more on
symmetries and sufficiency.

\begin{lemma} Formula
$$
\tilde{v}_{n,k} = P\{X\in\Gr(V_\infty): X\cap V_n\in G(n,k)\}, \quad
(n,k)\in\Gamma
$$
establishes a linear homeomorphism between $\tV$ and $\G_\infty$-invariant
probability measures on the Grassmannian $\Gr(V_\infty)$.
\end{lemma}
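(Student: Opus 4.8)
The plan is to reduce the statement to the recursion \eqref{dual} already analysed, by computing how the projection $\pi_{n+1,n}$ acts on dimensions. First I would record the reformulation of invariance established just before the lemma: a probability measure $P$ on $\Gr(V_\infty)$ is $\G_\infty$-invariant if and only if, for every $n$, the pushforward $P_n:=(X\mapsto X\cap V_n)_*P$ on $\Gr(V_n)$ is uniform on each orbit $G(n,k)$. Such a $P$ is therefore encoded by the orbit masses $\tv_{n,k}:=P_n(G(n,k))$, or equivalently by the per-point values $v_{n,k}:=\tv_{n,k}/d_{n,k}=P_n(\{Y\})$ for $Y\in G(n,k)$, using $\#G(n,k)=d_{n,k}$ from Lemma \ref{dim}; this is exactly the relation \eqref{rel}. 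Note also that the marginals are automatically compatible, $P_n=(\pi_{n+1,n})_*P_{n+1}$, since $X\cap V_n=\pi_{n+1,n}(X\cap V_{n+1})$.

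The crucial geometric input is the fiber structure of $\pi_{n+1,n}$. Fixing $Y\in G(n,k)$, I would describe $\pi_{n+1,n}^{-1}(Y)=\{X\in\Gr(V_{n+1}):X\cap V_n=Y\}$. Because $\dim(V_{n+1}/V_n)=1$, every such $X$ has dimension $k$ or $k+1$: the unique one of dimension $k$ is $Y$ itself, while those of dimension $k+1$ are the subspaces $X\supseteq Y$ whose image $X/Y$ is a line of $V_{n+1}/Y$ \emph{not} contained in $V_n/Y$. The number of such lines is $[n+1-k]-[n-k]=q^{n-k}$. This count, one subspace of dimension $k$ and $q^{n-k}$ subspaces of dimension $k+1$, is precisely the source of the weight $q^{n-k}$ in \eqref{dual}.

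With this in hand, evaluating the compatibility $P_n=(\pi_{n+1,n})_*P_{n+1}$ at a single $Y\in G(n,k)$ reads
$$
v_{n,k}=v_{n+1,k}+q^{n-k}v_{n+1,k+1},
$$
so $v\in\V$, equivalently $\tv\in\tV$, with $v_{0,0}=\tv_{0,0}=1$ since $P$ is a probability measure on the singleton $\Gr(V_0)$; this gives the forward map $P\mapsto\tv$. Conversely, given $\tv\in\tV$, I would define each $P_n$ to be uniform on orbits with total mass $\tv_{n,k}$ on $G(n,k)$; the same fiber count shows that \eqref{dual} is exactly the consistency relation $P_n=(\pi_{n+1,n})_*P_{n+1}$, so by the Kolmogorov extension theorem applied to the projective limit $\Gr(V_\infty)=\varprojlim\Gr(V_n)$ of Lemma \ref{lemma1} there is a unique probability measure $P$ with these marginals. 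Uniformity of each $P_n$ on $\G_n$-orbits makes every marginal $\G_m$-invariant for $m\le n$, and uniqueness of the extension then forces $P$ to be $\G_\infty$-invariant, producing the inverse map.

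Finally, linearity is clear, since each $\tv_{n,k}$ is the $P$-measure of the clopen cylinder $\{X:X\cap V_n\in G(n,k)\}$ and hence an affine functional of $P$; continuity for the weak topology follows because this cylinder is clopen in the profinite topology on $\Gr(V_\infty)$. As $\tV$ is compact in the product topology and the probability measures on the compact space $\Gr(V_\infty)$ are weak-$*$ compact, a continuous affine bijection between them is automatically a homeomorphism. The only genuine work is the fiber computation for $\pi_{n+1,n}$ and its matching with the weight $q^{n-k}$; this is where the $\F_q$-geometry reproduces the $q$-deformation of Pascal's recursion, and everything else is bookkeeping together with the standard projective-limit and extension machinery.
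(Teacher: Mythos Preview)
Your proposal is correct and follows essentially the same route as the paper: both arguments reduce the invariance to orbit-uniformity of the marginals $P_n$, compute the fibre of $\pi_{n+1,n}$ over a point of $G(n,k)$ (one subspace of dimension $k$ and $q^{n-k}$ of dimension $k+1$), and read off recursion \eqref{dual}; the converse via the projective limit is likewise identical. Your treatment is slightly more complete in that you explicitly justify the words ``linear homeomorphism'' via the clopen-cylinder and compactness argument, which the paper leaves implicit.
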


\begin{proof} We first spell out more carefully the remark before the lemma.
Consider projections
$$
\pi_{\infty,n}: \Gr(V_\infty)\to \Gr(V_n), \qquad X\mapsto X\cap V_n, \quad
X\in\Gr(V_\infty), \quad n=1,2,\dots\, .
$$
If $P$ is a Borel probability measure on the space $\Gr(V_\infty)$,
then, for any $n$, the pushforward $P_n:=\pi_{\infty,n}(P)$ is a
probability measure on $\Gr(V_n)$, and the measures $P_n$ are
consistent with respect to the projections $\pi_{n+1,n}$, that is,
$$
P_n=\pi_{n+1,n}(P_{n+1}), \qquad n=0,1,2,\dots\,.
$$
Conversely, if a sequence $(P_n)$ of probability measures is consistent, then
it determines a probability measure $P$ on $\Gr(V_\infty)$. Moreover, $P$ is
$\G_\infty$-invariant if and only if each $P_n$ is $\G_n$-invariant. Next,
observe that if $P_n$ is a $\G_n$-invariant probability measure, then it
assigns the same weight to each $k$-dimensional space $X_n\in G(n,k)$; let us
denote this weight by $v_{n,k}$.

Fix $X_n\in G(n,k)$. We claim that there are precisely $q^{n-k}+1$ subspaces
$X_{n+1}\in\Gr(V_{n+1})$ such that $X_{n+1}\cap V_n=X_n$: one subspace from
$G(n+1,k)$  and  $q^{n-k}$ subspaces from $G(n+1,k+1)$. Indeed, $\dim X_{n+1}$
equals either $k$ or $k+1$. In the former case $X_{n+1}=X_n$, while in the
latter case $X_{n+1}$ is spanned by $X_n$ and a nonzero vector from
$V_{n+1}\setminus V_{n}$. Such a vector is defined uniquely up to a scalar
multiple and addition of an arbitrary vector from $X_n$. Therefore, the number
of options is equal to the number of lines in $V_{n+1}/X_n$ not contained in
$V_n/X_n$, which equals
$$
\frac{q^{n+1-k}-1}{q-1}\, -\, \frac{q^{n-k}-1}{q-1}=q^{n-k}.
$$

Now, let $P$ be a $\G_\infty$-invariant probability measure on $\Gr(V_\infty)$,
with projections $(P_n)$ specified by the corresponding array of weights
$v=(v_{n,k})$. Then the relations $P_n=\pi_{n+1,n}(P_{n+1})$ together with the
dimension computation imply that $v$ satisfies (\ref{dual}).

Conversely, given $v\in\V$, we can construct a sequence  $(P_n)$ of measures
such that $P_n$ lives on $\Gr(V_n)$, is invariant under $\G_n$ and agrees with
$P_{n+1}$ under $\pi_{n+1,n}$. Since $P_0$, which lives on a singleton, is
obviously a probability measure, we obtain by induction that all $P_n$ are
probability measures. Taking their projective limit we get a
$\G_\infty$-invariant probability measure $P$ on $\Gr(V_\infty)$.
\end{proof}

Rephrasing Theorem \ref{main} we have from the lemma

\begin{corollary} The ergodic $\G_\infty$-invariant probability measures on
$\Gr(V_\infty)$ are parameterised by $\ka\in \{0,1,\ldots,\infty\}$. For
$\ka=0$ the measure is the Dirac mass at $V_\infty$, for $\ka=\infty$ it is the
Dirac mass at $V_0$, and for $0<\ka<\infty$ the measure is supported by the set
of subspaces of\/ $V_\infty$ of codimension $\ka$.
\end{corollary}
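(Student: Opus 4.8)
The plan is to obtain the corollary by feeding the description of the extreme points of $\tV$ into the preceding lemma. That lemma exhibits $\tv_{n,k}=P\{X\cap V_n\in G(n,k)\}$ as a linear homeomorphism between $\tV$ and the $\G_\infty$-invariant probability measures on $\Gr(V_\infty)$; being an affine homeomorphism of Choquet simplices, it carries extreme points to extreme points, and the extreme invariant measures are precisely the ergodic ones. So the first step is simply to recall from Theorem~\ref{main} that the extreme elements of $\tV$ are the images of the Dirac masses on the boundary and are therefore indexed by $\ka\in\{0,1,\dots,\infty\}$. This already gives the claimed parameterisation of the ergodic measures; it remains to identify each one geometrically.

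The point needing care is that in this section $q$ is a prime power, hence $q>1$, whereas Theorem~\ref{main} was established for $0<q<1$. I would therefore route through the reduction of the subsection on the case $q>1$: replace $q$ by $\bar q=q^{-1}<1$ and pass to the dual graph, which in the path encoding interchanges the two coordinates $l\leftrightarrow k$. On the Grassmannian side this interchange is exactly the passage from $\dim(X\cap V_n)$ to the codimension $n-\dim(X\cap V_n)$ of $X\cap V_n$ in $V_n$. Under this dictionary the extreme arrays of Theorem~\ref{main}, in which the coordinate bounded by the parameter is $k=\dim(X\cap V_n)\le\ka$, correspond to invariant measures for which it is instead the codimension $n-\dim(X\cap V_n)$ that stays $\le\ka$, the boundary point being $x=q^{-\ka}$.

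The decisive step is the geometric reading of the support. For a fixed $X\in\Gr(V_\infty)$ the numbers $\dim(X\cap V_n)$ are nondecreasing in $n$, so the codimensions $n-\dim(X\cap V_n)$ are nondecreasing as well, and since the $V_n$ exhaust $V_\infty$ their limit equals $\operatorname{codim}_{V_\infty}X=\dim(V_\infty/X)$. For the ergodic measure attached to $\ka$ I would argue, just as in Theorem~\ref{main}(ii), that as $n\to\infty$ the mass of the profile $\tv_{n,\bullet}$ concentrates at the value corresponding to codimension $\ka$, i.e. $P\{\,n-\dim(X\cap V_n)=\ka\,\}\to1$; together with the monotonicity this forces $\operatorname{codim}_{V_\infty}X=\ka$ almost surely. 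Hence the measure is supported by the set of subspaces of codimension $\ka$. The two endpoints drop out as limiting cases: $\ka=0$ means codimension $0$, that is $X=V_\infty$, while $\ka=\infty$ yields the degenerate atom $X=V_0$; both $V_\infty$ and $V_0$ are $\G_\infty$-fixed, so the corresponding Dirac masses are automatically ergodic.

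I expect the genuine obstacle to be matching the parameter $\ka$ to codimension rather than dimension, and thus applying the $q\leftrightarrow q^{-1}$ duality in the right direction. In infinite dimensions the two notions behave very differently: $\G_\infty$ acts transitively on the countable set of subspaces of any fixed finite dimension $\ge1$, so there is no invariant probability measure concentrated there, whereas it is highly non-transitive on subspaces of fixed finite codimension (for codimension $1$ these are the kernels of functionals, an uncountable family), which is exactly what leaves room for the ergodic measures found above. Getting this asymmetry the wrong way round would wrongly place the measures on finite-dimensional subspaces, so I would treat the dualization as the crux of the argument.
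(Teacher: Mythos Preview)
Your proposal is correct and follows the same route as the paper: the corollary is deduced by transporting the description of extreme elements of $\tV$ from Theorem~\ref{main} through the lemma identifying $\tV$ with the $\G_\infty$-invariant measures. The paper's own proof is in fact a single sentence (``Rephrasing Theorem~\ref{main} we have from the lemma''), so your write-up is considerably more explicit --- in particular your handling of the $q>1$ issue via the $q\leftrightarrow q^{-1}$ duality and the coordinate swap, and your verification that the limiting codimension of $X\cap V_n$ in $V_n$ equals $\operatorname{codim}_{V_\infty}X$, are details the paper leaves to the reader. Your closing remark about why codimension (not dimension) is the correct invariant also matches the paper's own comment at the end of Section~\ref{5}.
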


The following random algorithm describes explicitly the dynamics of the growing
space $X_n\in \Gr(V_n)$ as $n$ varies, under the ergodic measure with parameter
$\ka$. Recall the notation $\bar{q}=q^{-1}$. Start with $X_0=V_0$. With
probability $\bar{q}^\ka$ choose $X_1=V_1$, and with probability
$1-\bar{q}^\ka$ choose $X_1=X_0$. Suppose $X_n\subseteq V_n$ has been
constructed and has dimension $n-k$ with $k\leq\ka$. Then let $X_{n+1}=X_n$
with probability $1-\bar{q}^{\ka-k}$, and with probability $\bar{q}^{\ka-k}$
choose uniformly at random a nonzero vector $\xi\in V_{n+1}\setminus V_n$ and
let $X_{n+1}$  be the linear span of $X_n$ and $\xi$.

\subsection*{\sf Duality.}

We finish with  a dual version of our construction. Let $V^\infty$ denote the
set of all sequences $\eta=(\eta_1,\eta_2,\dots)$ with entries from $\F_q$.
This is again a vector space over $\F_q$, strictly larger than $V_\infty$ since
we do not require $\eta$ to have finitely many nonzero entries. That is to say,
$V^\infty$ is just the infinite product space $(\F_q)^\infty$, which we endow
with the product topology. Let $\Gr(V^\infty)$ denote the set of all {\it
closed\/} subspaces $Y\subseteq V^\infty$. A dual version of Lemma \ref{lemma1}
says that such subspaces $Y$ are in a bijective correspondence with the
sequences $(Y_n\in\Gr(V_n), n\geq 0)$ such that $Y_n=\pi'_{n+1,n}(Y_{n+1})$,
where $\pi'_{n+1,n}$ is induced by the projection map $V_{n+1}\to V_n$ which
sets the $(n+1)$th coordinate of a vector $\xi\in V_{n+1}$ equal to $0$. The
branching of $G(n,k)$'s under these projections corresponds to the dual
$q$-Pascal graph.

\begin{lemma}
The operation of passing to the orthogonal complement with respect to the
bilinear form
$$
\langle\xi,\eta\rangle:=\sum_{i=1}^\infty \xi_i\eta_i, \qquad \xi\in
V_\infty, \quad \eta\in V^\infty,
$$
is a  bijection
$\Gr(V_\infty)\leftrightarrow\Gr(V^\infty)$.
\end{lemma}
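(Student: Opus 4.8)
The plan is to reduce everything to finite-dimensional orthogonality by exploiting the two projective-limit descriptions already in play. By Lemma~\ref{lemma1} we have $\Gr(V_\infty)=\varprojlim(\Gr(V_n),\pi_{n+1,n})$ with $\pi_{n+1,n}(W)=W\cap V_n$, and by its dual $\Gr(V^\infty)=\varprojlim(\Gr(V_n),\pi'_{n+1,n})$ with $\pi'_{n+1,n}$ the dropping of the last coordinate. On each level the restriction of $\langle\cdot,\cdot\rangle$ to $V_n\cong\F_q^{\,n}$ is the standard nondegenerate symmetric form, so $W\mapsto W^{\perp_n}$ is an involutive bijection of $\Gr(V_n)$ carrying $G(n,k)$ to $G(n,n-k)$. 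The whole statement will follow once these level-wise involutions are shown to intertwine the two transition maps and to assemble into the concrete global orthogonal complement.

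First I would prove the intertwining identity: for every $W\in\Gr(V_{n+1})$,
\[
(\pi_{n+1,n}W)^{\perp_n}=\pi'_{n+1,n}\bigl(W^{\perp_{n+1}}\bigr).
\]
This is just the adjointness of the inclusion $\iota\colon V_n\hookrightarrow V_{n+1}$ and the coordinate projection $p=\pi'_{n+1,n}$, encoded in $\langle \iota x,\eta\rangle=\langle x,p\eta\rangle$. The inclusion $p(W^{\perp_{n+1}})\subseteq(\iota^{-1}W)^{\perp_n}$ is immediate from this identity, and equality follows from a dimension count split according to whether $W\subseteq V_n$ or not (equivalently, whether the last basis vector lies in $W^{\perp_{n+1}}$). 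Since the sequence $(X_n)$ attached to an $X\in\Gr(V_\infty)$ is $\pi$-consistent, the identity shows that $(X_n^{\perp_n})$ is $\pi'$-consistent, so the family $(\cdot)^{\perp_n}$ descends to a bijection $\Gr(V_\infty)\to\Gr(V^\infty)$, $(X_n)\mapsto(X_n^{\perp_n})$.

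Next I would identify this limit bijection with the orthogonal complement of the statement. Writing $\pi'_{\infty,n}\colon V^\infty\to V_n$ for the recording of the first $n$ coordinates and taking $X$ with flag $X_n=X\cap V_n$, the definition of the form together with $X=\bigcup_n X_n$ and $\langle\xi,\eta\rangle=\langle\xi,\pi'_{\infty,n}\eta\rangle$ for $\xi\in X_n$ yields the clean description
\[
X^\perp=\{\eta\in V^\infty:\ \pi'_{\infty,n}\eta\in X_n^{\perp_n}\ \text{for all }n\}.
\]
This exhibits $X^\perp$ as an intersection of cylinders, hence closed, and as precisely the element of $\Gr(V^\infty)$ corresponding to $(X_n^{\perp_n})$; so $X\mapsto X^\perp$ is the limit bijection and indeed lands in $\Gr(V^\infty)$. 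Symmetrically, for a \emph{closed} $Y$ with projections $Y_n=\pi'_{\infty,n}(Y)$ one checks $Y^\perp\cap V_n=Y_n^{\perp_n}$, whence $Y^\perp$ carries the flag $(Y_n^{\perp_n})$, and the level-wise involutivity $(W^{\perp_n})^{\perp_n}=W$ gives $(X^\perp)^\perp=X$ and $(Y^\perp)^\perp=Y$.

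The step needing the most care --- and the reason closedness is imposed on members of $\Gr(V^\infty)$ --- is the last identity $(Y^\perp)^\perp=Y$. For fixed $\xi\in V_\infty$ the functional $\langle\xi,\cdot\rangle$ is continuous on $V^\infty$ in the product topology, so $Y^\perp=\overline{Y}^{\,\perp}$ for any subspace $Y$; thus the double complement can only ever recover the closure $\overline{Y}$, and equality holds exactly when $Y$ is closed. Granting the dual of Lemma~\ref{lemma1} (closed subspaces are precisely the projective limits of their finite projections), this reduces $(Y^\perp)^\perp=Y$ to the level-wise involutivity already noted, and the two maps are seen to be mutually inverse bijections.
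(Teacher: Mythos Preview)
Your argument is correct and takes a genuinely different route from the paper's. The paper proceeds abstractly: it observes that the pairing puts $V_\infty$ (discrete) and $V^\infty$ (product topology) in topological duality---every linear functional on $V_\infty$ comes from $V^\infty$, and every \emph{continuous} functional on $V^\infty$ comes from $V_\infty$---and then appeals to this duality (essentially Pontryagin duality of the underlying locally compact abelian groups) to conclude that $(X^\bot)^\bot=X$ and $(Y^\bot)^\bot=Y$ for closed $Y$, without further computation. You instead work level by level: you prove the intertwining identity $(\pi_{n+1,n}W)^{\perp_n}=\pi'_{n+1,n}(W^{\perp_{n+1}})$ by adjointness and a dimension count, assemble the finite-dimensional involutions $W\mapsto W^{\perp_n}$ into a bijection of projective limits, and then identify this limit map with the global $X\mapsto X^\perp$. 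Your approach is more elementary and self-contained---it uses only finite-dimensional orthogonality and the projective-limit descriptions already set up in the paper---and it makes the connection with the $\pi$/$\pi'$ structure (hence with the $q$-Pascal graph and its dual) completely explicit. The paper's approach is shorter and more conceptual, and it situates the lemma within standard duality theory, but it leaves the verification of $(Y^\bot)^\bot=Y$ as an exercise; your reduction to level-wise involutivity actually carries that verification out.
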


\begin{proof} First of all, note that the bilinear form
is well defined, because the coordinates $\xi_i$ of $\xi\in V_\infty$
vanish for $i$ large
enough. This form determines a bilinear pairing $V_\infty\times
V^\infty\to\F_q$. We claim that it brings the spaces $V_\infty$
and $V^\infty$ into duality, where $V^\infty$ is viewed as a
vector space with nontrivial topology, and  the topology on $V_\infty$ is
discrete.

Indeed, it is evident that the pairing is nondegenerate and that any linear
functional on $V_\infty$ is given by a vector of $V^\infty$. A minor reflection
also shows that, conversely, any {\it continuous\/} linear functional on
$V^\infty$ is given by a vector from $V_\infty$. Thus, the  spaces $V_\infty$
and $V^\infty$ are indeed dual to one another. They are also dual  as
commutative locally compact topological groups: one is discrete and the other
is compact.

Using the duality, it is readily checked that if $X$ is an arbitrary subspace
in $V_\infty$, then its orthogonal complement $X^\bot$ is a closed subspace in
$V^\infty$, whose orthogonal complement $(X^\bot)^\bot$ coincides with $X$.
Likewise, starting with a closed subspace $Y\subseteq V^\infty$, we have
$Y^\bot\subseteq V_\infty$ and $(Y^\bot)^\bot=Y$. Thus,  the operation of
taking the orthogonal complement is  a bijection.
\end{proof}

The group $\G_\infty$ acts on both  $V_\infty$ and $V^\infty$ and preserves the
pairing between these vector spaces. Under the identification
$\Gr(V^\infty)=\Gr(V_\infty)$, the group $\G_\infty$ acts by homeomorphisms on
this compact space. In the dual picture, the ergodic measures with $\ka<\infty$
live on the set of $\ka$-dimensional subspaces of $V^\infty$ . The case
$\ka=\infty$ corresponds then to the zero subspace in $V_\infty$ (or the full
space $V^\infty$). There is a simple explanation why we have to fix codimension
in the $V_\infty$-picture and dimension in the $V^\infty$-picture, and not vice
versa. Namely, the subspaces in $V_\infty$ of fixed nonzero finite dimension
form a countable set, which is a single $\G_\infty$-orbit, and such a
$\G_\infty$-space cannot carry a finite invariant measure.

\end{document}